\documentclass[numbook,envcountsame,draft]{svjour3}
\pdfoutput=1
\usepackage{enumitem}
%
%
 \makeatletter
 \if@twocolumn
   \renewcommand\normalsize{%
    \@setfontsize\normalsize\@xpt{12.5pt}%
    \abovedisplayskip=3 mm plus6pt minus 4pt
    \belowdisplayskip=3 mm plus6pt minus 4pt
    \abovedisplayshortskip=0.0 mm plus6pt
    \belowdisplayshortskip=2 mm plus4pt minus 4pt
    \let\@listi\@listI}%

   \renewcommand\small{%
    \@setfontsize\small{8.5pt}\@xpt
    \abovedisplayskip 8.5\p@ \@plus3\p@ \@minus4\p@
    \abovedisplayshortskip \z@ \@plus2\p@
    \belowdisplayshortskip 4\p@ \@plus2\p@ \@minus2\p@
    \def\@listi{\leftmargin\leftmargini
                \parsep 0\p@ \@plus1\p@ \@minus\p@
                \topsep 4\p@ \@plus2\p@ \@minus4\p@
                \itemsep0\p@}%
    \belowdisplayskip \abovedisplayskip}

 \else
   \if@smallext
    \renewcommand\normalsize{%
    \@setfontsize\normalsize\@xpt\@xiipt
    \abovedisplayskip=3 mm plus6pt minus 4pt
    \belowdisplayskip=3 mm plus6pt minus 4pt
    \abovedisplayshortskip=0.0 mm plus6pt
    \belowdisplayshortskip=2 mm plus4pt minus 4pt
    \let\@listi\@listI}%

   \renewcommand\small{%
    \@setfontsize\small\@viiipt{9.5pt}%
    \abovedisplayskip 8.5\p@ \@plus3\p@ \@minus4\p@
    \abovedisplayshortskip \z@ \@plus2\p@
    \belowdisplayshortskip 4\p@ \@plus2\p@ \@minus2\p@
    \def\@listi{\leftmargin\leftmargini
                \parsep 0\p@ \@plus1\p@ \@minus\p@
                \topsep 4\p@ \@plus2\p@ \@minus4\p@
                \itemsep0\p@}%
    \belowdisplayskip \abovedisplayskip}
  \else
   \renewcommand\normalsize{%
    \@setfontsize\normalsize{9.5pt}{11.5pt}%
    \abovedisplayskip=3 mm plus6pt minus 4pt
    \belowdisplayskip=3 mm plus6pt minus 4pt
    \abovedisplayshortskip=0.0 mm plus6pt
    \belowdisplayshortskip=2 mm plus4pt minus 4pt
    \let\@listi\@listI}%

   \renewcommand\small{%
    \@setfontsize\small\@viiipt{9.25pt}%
    \abovedisplayskip 8.5\p@ \@plus3\p@ \@minus4\p@
    \abovedisplayshortskip \z@ \@plus2\p@
    \belowdisplayshortskip 4\p@ \@plus2\p@ \@minus2\p@
    \def\@listi{\leftmargin\leftmargini
                \parsep 0\p@ \@plus1\p@ \@minus\p@
                \topsep 4\p@ \@plus2\p@ \@minus4\p@
                \itemsep0\p@}%
    \belowdisplayskip \abovedisplayskip}
   \fi
 \fi
 
 \makeatother
%
%
\smartqed
\usepackage[final]{microtype}
\usepackage{mathptmx}
\DeclareMathAlphabet{\mathcal}{OMS}{cmsy}{m}{n}
\usepackage{graphicx}
\usepackage{amsmath}
\usepackage{mathrsfs}
\usepackage{amssymb}
\usepackage{tikz}
\usepackage{ytableau}
\newcommand{\lside}{_{\mathsf{L}}}

\newcommand{\lessthan}[2]{#1\,{\downarrow}\,#2}
\newcommand{\morethan}[2]{#1\,{\uparrow}\,#2}
\newcommand{\Case}[1]{\smallskip\noindent\textit{Case #1}\endgraf
   \nobreak\smallskip\noindent\ignorespaces}

\newcommand{\tab}{\tau}
\DeclareMathOperator{\STD}{\mathrm{Std}}
\DeclareMathOperator{\Tab}{\mathrm{Tab}}
\DeclareMathOperator{\Skew}{\mathrm{Skew}}
\newcommand{\perm}{\mathrm{perm}}
\newcommand{\squash}{\mathrm{sqsh}}
\newcommand{\moveup}{m}
\newcommand{\row}{\mathrm{row}}
\newcommand{\col}{\mathrm{col}}
\newcommand{\Sym}{\mathrm{Sym}}

\journalname{Algebraic Combinatorics}

\begin{document}
\title{\textit{W}\!-graph determining elements in type \textit{A}}\label{Pre}
\author{Van Minh Nguyen}
\institute{   Van Minh Nguyen \at
              University of Sydney \\
              Fax: +61 2 9351 4534\\
              \email{VanNguyen@.maths.usyd.edu.au}
}
\date{}
\let\makeheadbox\relax
\maketitle

\begin{abstract}
Let \((W,S)\) be a Coxeter system of type \(A\), so that \(W\)
can be identified with the symmetric group \(\Sym(n)\) for
some positive integer~\(n\) and \(S\) with the set of
simple transpositions \(\{\,(i,i+1)\mid 1\leqslant i\leqslant n-1\,\}\). Let
\(\leqslant\lside\) denote the left weak order on \(W\), and for each
\(J\subseteq S\) let \(w_J\) be the longest element of the subgroup
\(W_J\) generated by~\(J\). We show that the basic skew diagrams with \(n\)
boxes are in bijective correspondence with the pairs \((w,J)\) such that the set
\(\{\,x\in W\mid w_J\leqslant\lside x\leqslant\lside ww_J\,\}\) is a nonempty
union of Kazhdan--Lusztig left cells. These are also the pairs \((w,J)\) such
that \(\mathscr{I}(w)=\{\,v\in W\mid v\leqslant\lside w\,\}\)
is a \(W\!\)-graph ideal with respect to~\(J\). Moreover, for each such
pair the elements of \(\mathscr{I}(w)\) are in bijective correspondence
with the standard tableaux associated with the corresponding skew diagram.

\keywords{Coxeter group \and \(W\!\)-graph \and Kazhdan--Lusztig cell}

\end{abstract}

\section{Introduction}
\label{intro}
Let \((W,S)\) be a Coxeter system and denote by \(\leqslant\lside\) the
left weak order on \(W\) (defined by \(y\leqslant\lside x\) if and only if
\(l(xy^{-1})=l(x)-l(y)\), where \(l\) denotes length relative to~\(S\)).
In~\cite{howvan:wgraphDetSets}
an algorithm was given that takes as input a pair \((\mathscr{I},J)\),
where \(\mathscr{I}\) is an ideal
of \((W,\leqslant\lside)\) and \(J\)
is a subset of \(S\), and produces a graph with edges labelled by integers and
vertices coloured with subsets of~\(S\). If \((\mathscr{I},J)\) is a
\(W\!\)-graph ideal then the output is a \(W\!\)-graph. It was shown
in~\cite{howvan:wgraphDetSets} that \(W\!\)-graphs for the Specht modules
can be produced in this way. In~\cite{nguyen:wgideals2} it was shown,
more generally, that \(W\!\)-graphs for the Kazhdan--Lusztig left cells that
contain longest elements of standard parabolic subgroups can
be obtained from \(W\!\)-graph ideals. Indeed, if \(J \subseteq S\)
and \(W_J\) is the subgroup generated by~\(J\), then the left cell
containing \(w_J\) (the longest element of \(W_J\)) is equal to
\(\mathscr{I}w_J\), where \((\mathscr{I},J)\) is a \(W\!\)-graph ideal. 
 
Our first main result says that if \((W,S)\) is of type~\(A\) and
\(\mathscr I\) is an ideal of \((W,\leqslant\lside)\) then
\(\mathscr{I}w_J\) is a union of Kazhdan--Lusztig left cells whenever
\((\mathscr I,J)\) is a \(W\!\)-graph ideal. Note that, by
\cite[Theorem 9.5]{nguyen:wgbiideals}, this is not
true for types other than~\(A\), even when \((W,S)\) has rank~2. However,
\cite[Theorem 5.2]{nguyen:wgbiideals} shows that, for all types,
if \(\mathcal C\) is a set of left cells that is upward closed, in the sense
that \(c\in\mathcal C\) and \(c'\geqslant c\) implies \(c'\in \mathcal C\),
then \(\bigcup_{c\in\mathcal C}c\) is a \(W\!\)-graph ideal.

Our second main result is the classification, when \((W,S)\) is of type \(A_{n-1}\),
of the pairs \((w,J)\) such that \((\mathscr{I}(w),J)\) is a \(W\!\)-graph ideal,
where \(\mathscr{I}(w) = \{v\in W \mid v\leqslant\lside w\}\). These are exactly
the pairs \((w,J)\) such that \(l(ws)>l(w)\) for all~\(s\in J\) and
\(\mathscr{I}(w)w_J\) is a union of Kahzdan--Lusztig left cells. Furthermore, they
are parametrized by the skew partitions of \(n\), and in each case the elements of
\(\mathscr{I}(w)\) are parametrized by the standard tableaux associated with the
corresponding basic skew diagram.

Since the current work is a sequel to~\cite{nguyen:wgbiideals}, we shall
freely use the notation and terminology of that paper.

\section{Relationship between \textit{W}-graph ideals and Kazhdan--Lusztig
left cells in type \textit{A}}
\label{sec:3}

A complete classification of \(W\!\)-graph ideals of finite Coxeter groups of
rank \(2\) is given in Theorem~9.5 of \cite{nguyen:wgbiideals}. We shall
make use of the following special case.
\begin{lemma}\label{idealA2}
Let \((W,S)\) be a Coxeter system of type \(A_2 = I_2(2)\), and let \(S=\{s,t\}\).
Then \((\mathscr{I}\!,\,J)\) is a \(W\!\)-graph ideal if and only if
one of the following alternatives is satisfied:\setitemindent{xx(viii)}
\begin{itemize}[topsep=1 pt]
\item[\textup{(i)}]\((\mathscr{I}\!,\,J)=(\{1\},S)\),
\item[\textup{(ii)}]\((\mathscr{I}\!,\,J)=(\{1\},\emptyset)\),
\item[\textup{(iii)}]\((\mathscr{I}\!,\,J)=(\{1,t,st\},\{s\})\),
\item[\textup{(iv)}]\((\mathscr{I}\!,\,J)=(\{\{1,t\},\{s\})\),
\item[\textup{(v)}]\((\mathscr{I}\!,\,J)=(\{1,s,ts\},\{t\})\),
\item[\textup{(vi)}]\((\mathscr{I}\!,\,J)=(\{\{1,s\},\{t\})\),
\item[\textup{(vii)}]\((\mathscr{I}\!,\,J)=(\{1,s,t,ts,st,tst\},\emptyset)\),
\item[\textup{(viii)}]\((\mathscr{I}\!,\,J)=(\{1,s,t,ts,st\},\emptyset)\).
\end{itemize}
\end{lemma}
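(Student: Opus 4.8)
The plan is to derive the lemma as the specialization to type \(A_2\) of the general rank-two classification, Theorem~9.5 of \cite{nguyen:wgbiideals}. First I would record the concrete combinatorial data. Writing \(S=\{s,t\}\), the group \(W\) is the symmetric group on three letters, with elements \(1,\,s,\,t,\,st,\,ts\) and the longest element \(w_0=sts=tst\); under the left weak order \(\leqslant\lside\) these form a hexagon with least element \(1\), greatest element \(w_0\), and the two maximal chains \(1\leqslant\lside s\leqslant\lside ts\leqslant\lside w_0\) and \(1\leqslant\lside t\leqslant\lside st\leqslant\lside w_0\). A short enumeration then shows that \((W,\leqslant\lside)\) has exactly ten nonempty order ideals.

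Next I would cut down the candidate pairs \((\mathscr I,J)\) using the necessary conditions built into the definition of a \(W\!\)-graph ideal. The decisive one is that every element of \(\mathscr I\) must be a minimal-length representative of its coset in \(W/W_J\); equivalently no \(s\in J\) is a right descent of any \(w\in\mathscr I\), so that \(\mathscr I\subseteq\{x\in W\mid l(xs)>l(x)\text{ for all }s\in J\}\). Imposing this leaves only the ideal \(\{1\}\) for \(J=S\); the ideals contained in the chain \(\{1,t,st\}\) for \(J=\{s\}\); their \(s\!\leftrightarrow\!t\) mirror images for \(J=\{t\}\); and all ten ideals for \(J=\emptyset\) --- seventeen candidate pairs in total. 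Theorem~9.5 of \cite{nguyen:wgbiideals} then selects exactly which of these seventeen are \(W\!\)-graph ideals, and reading off its statement at type \(A_2\) yields the eight pairs (i)--(viii). The diagram automorphism interchanging \(s\) and \(t\) halves the work, pairing (iii) with (v) and (iv) with (vi) and fixing (i), (ii), (vii), (viii).

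As an independent check --- and to give a self-contained route not relying on the general theorem --- I would verify each of the eight pairs directly against the defining ascent/descent compatibility conditions of \cite{nguyen:wgbiideals}, computing for every \(w\in\mathscr I\) and every generator whether it is a weak or strong ascent or descent of \(w\) relative to \((\mathscr I,J)\) (the quantities \(\WA\), \(\SA\), \(\WD\), \(\SD\)) and confirming that the resulting labelled graph satisfies the \(W\!\)-graph axioms. The same computation applied to the nine candidates that pass the necessary condition but are nonetheless excluded (for instance \((\{1\},\{s\})\)) shows the compatibility fails; together with the necessary condition above, which disqualifies every remaining pair, this establishes the ``only if'' direction. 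In rank two each such check is a finite computation on at most six vertices.

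The main obstacle is bookkeeping in the degeneration. Theorem~9.5 is phrased uniformly for the dihedral groups \(I_2(m)\), and some of its families are present only for larger \(m\) or coincide when \(m\) is small; the task is to confirm that at the value corresponding to \(A_2\) no family is spuriously duplicated and none is overlooked, and in particular that the coincidence \(sts=tst\) special to this case is correctly accommodated. In the direct route the corresponding difficulty is organizing the seventeen candidates so that both completeness (no pair is missed) and irredundancy are transparent.
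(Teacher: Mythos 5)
Your proposal takes essentially the same route as the paper: the paper offers no independent argument for this lemma, presenting it simply as the type-\(A_2\) special case of the rank-two classification in Theorem~9.5 of \cite{nguyen:wgbiideals}, which is exactly your primary strategy. Your enumeration of the ten nonempty ideals and seventeen candidate pairs, and the backup direct verification of the ascent/descent axioms, are sound supplementary detail but go beyond what the paper itself records.
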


\begin{remark}\label{idealA2rem}
Let \((\mathscr{I}\!,\,J)\) be one of the \(W\!\)-graph ideals in the above list.
It is readily checked that the set \(\mathscr{I}w_J\) contains the element \(t\) if
and only if it also contains the element~\(st\). Similarly, \(s\in\mathscr{I}w_J\)
if and only if \(ts\in\mathscr{I}w_J\). This amounts to saying that 
\(\mathscr{I}w_J\) is a union of left cells of~\(W\), since \(\{t,st\}\) and
\(\{s,ts\}\) are left cells of~\(W\), and the other left cells are singleton sets.
\end{remark}
The following result was proved in \cite[Theorem 8.4]{nguyen:wgbiideals}.

\begin{theorem}\label{restrictedWGideal}
Let \((\mathscr I,J)\) be a \(W\!\)-graph ideal.
Suppose that \(K \subseteq S\) and \(d \in D_{K}^{-1} \cap \mathscr I\!\), and
put \(\mathscr{I}_d = \{\,y \in W_K \mid yd \in \mathscr I\,\}\).
Then \((\mathscr{I}_d,K\cap dJd^{-1})\) is a \(W_{K}\)-graph ideal.
\end{theorem}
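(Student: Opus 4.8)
The plan is to realise both \(W\!\)-graph ideals through their associated Hecke-algebra modules and to compare them. Write \(A=\mathbb Z[q,q^{-1}]\) and let \(\mathcal H=\mathcal H(W)\) be the Hecke algebra, with \(\mathcal H_K=\mathcal H(W_K)\) the parabolic subalgebra. Recall that \((\mathscr I,J)\) being a \(W\!\)-graph ideal means that the standard module \(M=M(\mathscr I,J)\) --- the cyclic \(\mathcal H\)-module with \(A\)-basis \(\{\,m_w\mid w\in\mathscr I\,\}\), generated by \(m_1\), on which \(T_s\) acts by \(m_{sw}\) when \(l(sw)>l(w)\) and \(sw\in\mathscr I\), by the quadratic companion when \(l(sw)<l(w)\), and by a prescribed scalar multiple of \(m_w\) (governed by \(J\)) when \(sw\notin\mathscr I\) --- carries a well-defined bar involution \(\beta\) fixing \(m_1\), together with the resulting Kazhdan--Lusztig-type canonical basis exhibiting \(M\) as a \(W\!\)-graph module. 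The goal is to produce the analogous data for the pair \((\mathscr I_d,K')\), where \(K'=K\cap dJd^{-1}\).

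First I would dispose of the combinatorial points. Since \(d\in D_K^{-1}\) we have \(l(yd)=l(y)+l(d)\) for every \(y\in W_K\); transporting a left-weak-order descent \(y'\leqslant\lside y\) in \(W_K\) to \(y'd\leqslant\lside yd\) in \(W\) then shows that \(\mathscr I_d\) is an ideal of \((W_K,\leqslant\lside)\), and that \(\mathscr I_d\neq\emptyset\) exactly when \(d\in\mathscr I\) (the hypothesis), in which case \(1\in\mathscr I_d\). Next, because \(m_{yd}=T_yT_dm_1=T_ym_d\) for \(y\in\mathscr I_d\), the \(A\)-span \(M_d\) of \(\{\,m_{yd}\mid y\in\mathscr I_d\,\}\) equals the cyclic \(\mathcal H_K\)-module \(\mathcal H_Km_d\); the length additivity guarantees that the \(T_s\)-action with \(s\in K\) preserves \(M_d\), so \(M_d\) is an \(\mathcal H_K\)-submodule of \(\mathrm{Res}_{\mathcal H_K}M\) (indeed \(\mathrm{Res}_{\mathcal H_K}M\) is the direct sum of the \(M_d\) over the cosets in \(W_K\backslash W\) meeting \(\mathscr I\)).

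I would then identify \(M_d\), as an \(\mathcal H_K\)-module, with the standard module \(M_{W_K}(\mathscr I_d,K')\) via \(m_{yd}\mapsto m_y\). In the three interior cases the action rules transport verbatim using \(l(yd)=l(y)+l(d)\). The only case needing attention is the boundary case, where \(s\in K\), \(l(sy)>l(y)\) and \(sy\notin\mathscr I_d\): here \(T_sm_{yd}\) is a scalar multiple of \(m_{yd}\), and I must check that the scalar dictated by \(J\) in \(M\) agrees with the scalar dictated by \(K'\) in \(M_{W_K}(\mathscr I_d,K')\). This is where the choice \(K'=K\cap dJd^{-1}\) is forced: for \(s\in K\) one has \(s\in K'\) if and only if \(d^{-1}sd\in J\), which is precisely the condition selecting the same scalar on both sides, so the two module structures coincide.

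The main obstacle is the final step: upgrading this \(\mathcal H_K\)-module isomorphism to the statement that \((\mathscr I_d,K')\) is a \(W_K\)-graph ideal, that is, that \(M_{W_K}(\mathscr I_d,K')\) carries a well-defined bar involution \(\beta_d\) (fixing \(m_d\)) and hence a canonical basis. The subtlety is that \(\beta_d\) is \emph{not} the restriction of \(\beta\): in general \(\beta(m_d)\neq m_d\) and \(\beta\) mixes the summands \(M_d\), so one cannot simply restrict the \(W\!\)-graph basis of \(M\) to the coset of \(d\). Instead I would deduce the well-definedness of \(\beta_d\) --- equivalently, the bar-stability of \(\mathrm{Ann}_{\mathcal H_K}(m_d)\) --- from the bar-stability of \(\mathrm{Ann}_{\mathcal H}(m_1)\), which is available because \((\mathscr I,J)\) is a \(W\!\)-graph ideal, using the factorisation \(m_d=T_dm_1\) together with the distinguished-representative property of \(d\) to control how the bar involution interacts with left multiplication by \(T_d\). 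Once \(\beta_d\) is known to be a well-defined involution with the correct triangularity against \(\{\,m_y\mid y\in\mathscr I_d\,\}\), the standard Kazhdan--Lusztig existence argument produces the canonical basis, completing the proof that \((\mathscr I_d,K')\) is a \(W_K\)-graph ideal. I expect the descent of bar-stability along \(T_d\) to be the most delicate ingredient.
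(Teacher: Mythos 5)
A preliminary caveat: the paper does not prove this statement at all --- it is quoted verbatim from Theorem~8.4 of the companion paper on \(W\!\)-graph ideals and bi-ideals --- so there is no in-paper proof to compare yours against. Judged on its own terms, your outline has a sensible overall shape (restrict the module, identify the piece attached to the coset \(W_Kd\) with the standard module of \((\mathscr I_d,K\cap dJd^{-1})\), then transfer the bar involution), and you are right that the bar involution is a delicate point. The purely combinatorial parts are fine: \(\mathscr I_d\) is an ideal of \((W_K,\leqslant\lside)\) containing \(1\), and the scalar-matching argument does force \(K'=K\cap dJd^{-1}\) (though the relevant conjugation at a general \(y\in\mathscr I_d\) is of \(y^{-1}sy\), not of \(s\) itself).

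The genuine gap is in the step you treat as routine. You assert that the \(A\)-span \(M_d\) of \(\{\,m_{yd}\mid y\in\mathscr I_d\,\}\) is an \(\mathcal H_K\)-submodule ``by length additivity'' and that \(\mathrm{Res}_{\mathcal H_K}M\) is the direct sum of the \(M_d\). This is false in general, because your description of the defining module omits the case that makes the theorem nontrivial: when \(s\in K\), \(l(syd)>l(yd)\), \(syd\notin\mathscr I\) and \(syd\in D_J\) (a weak descent), the action of \(T_s\) on \(m_{yd}\) is not a scalar multiple of \(m_{yd}\) but \(-m_{yd}\) plus correction terms supported on lower elements of \(\mathscr I\), and those elements need not lie in \(W_Kd\). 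Concretely, take \(W\) of type \(A_2\) with \(S=\{s,t\}\), \(\mathscr I=\{1,s,t,st,ts\}\), \(J=\emptyset\) (item (viii) of Lemma~\ref{idealA2}); then \(M\cong\mathcal H/A\bigl(\sum_wT_w\bigr)\) and \(T_tm_{st}=-m_{st}-(m_1+m_s+m_t+m_{ts})\), so for \(K=\{t\}\) and \(d=st\) the line \(Am_{st}\) is not \(T_t\)-stable. What survives is a filtration of \(\mathrm{Res}_{\mathcal H_K}M\) whose subquotients are the \(M_d\); setting this up requires controlling the supports of the weak-descent correction terms, which is most naturally done with the canonical (\(W\!\)-graph) basis rather than the standard one --- and indeed the discussion immediately following the theorem in this paper reveals the intended mechanism: \(W_Kd\cap\mathscr I\) is a union of cells of the restricted \(W_K\)-graph \(\Gamma_K\), and the full subgraph it spans realises \(\Gamma(\mathscr I_d,K\cap dJd^{-1})\). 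Your final step (descending bar-stability of the annihilator along \(T_d\)) is only a heuristic as written, and once \(M_d\) is recognised as a subquotient it must also be checked that the submodule being factored out is bar-stable; both points come essentially for free once one works with the canonical basis, but not with the standard one.
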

Using the notation of \cite{nguyen:wgbiideals}, if \((\mathscr{I}\!,\,J)\) is a
\(W\!\)-graph ideal let \(\Gamma(\mathscr{I}\!,\,J)\) denote the corresponding 
\(W\!\)-graph. We shall identify \(\mathscr I\) with the vertex set
of~\(\Gamma(\mathscr{I}\!,\,J)\). Continuing with the hypotheses of
Theorem~\ref{restrictedWGideal}, let \(\Gamma=\Gamma(\mathscr I,J)\) and
let \(\Gamma_K\) be the \(W_K\)-graph obtained from~\(\Gamma\) by ignoring the
elements of~\(S\setminus K\). By \cite[Remark~8.6]{nguyen:wgbiideals}, the mapping
\(y\mapsto yd\) from
\(\mathscr I_d\) to \(W_Kd\cap\mathscr I\) induces an isomorphism from the
\(W_K\)-graph \(\Gamma(\mathscr I_d,K\cap dJd^{-1})\) to the full subgraph of
\(\Gamma_K\) spanned by \(W_Kd\cap\mathscr{I}\). Moreover, this set is a union
of cells of~\(\Gamma_K\). Since it is trivial that each cell \(X\) of \(\Gamma\)
is a union of cells of~\(\Gamma_K\), it follows that \(Xd^{-1}\cap W_K\)
is a union of cells of \(\Gamma(\mathscr I_d,K\cap dJd^{-1})\). Applying this
in the case \(\mathscr I=D_J\) gives the following result.

\begin{proposition}\label{cellrestrict}
Let \(J,\,K\subseteq S\) and \(d\in D_{K,J}\), and let \(M=K\cap dJd^{-1}\).
If \(X\) is any cell of~\((D_J,J)\), then \(Xd^{-1}\cap W_K\) is a union of cells
of~\((D_M^K,M)\), where \(D_M^K=W_K\cap D_M\).
\end{proposition}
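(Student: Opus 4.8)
The plan is to specialise Theorem~\ref{restrictedWGideal} and the discussion following it to the case $\mathscr I = D_J$, exactly as the sentence preceding the statement signals. Two things need checking for this particular $\mathscr I$: that the hypotheses of that theorem are met, and that the resulting ideal $\mathscr I_d$ coincides with $D_M^K$. Once these are in place, the conclusion is obtained by transporting the cell structure through the isomorphism described just before the statement, so the substance of the argument is the combinatorial identification.

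First I would record the standard identity $D_{K,J} = D_K^{-1}\cap D_J$ for distinguished double coset representatives; thus $d\in D_{K,J}$ gives both $d\in D_K^{-1}$ and $d\in D_J$. Since $(D_J,J)$ is a $W\!$-graph ideal and $d\in D_K^{-1}\cap D_J = D_K^{-1}\cap\mathscr I$, Theorem~\ref{restrictedWGideal} applies with $\mathscr I = D_J$, yielding that $(\mathscr I_d,M)$ is a $W_K$-graph ideal, where $\mathscr I_d=\{\,y\in W_K\mid yd\in D_J\,\}$ and $M=K\cap dJd^{-1}$.

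The main step, and the one requiring genuine work, is the identification $\mathscr I_d = D_M^K$. For this I would use the unique factorisation attached to a distinguished double coset representative: since $d\in D_{K,J}$ and $W_K\cap dW_Jd^{-1}=W_M$ with $M=K\cap dJd^{-1}$, every $w\in W_KdW_J$ can be written uniquely as $w=adb$ with $a\in D_M^K$ and $b\in W_J$, and with $l(w)=l(a)+l(d)+l(b)$. Because $b\in W_J$ we have $wW_J=adW_J$, and the same length additivity shows $l(adv)=l(ad)+l(v)$ for all $v\in W_J$, so the minimal length element of $wW_J$ is $ad$; hence $w\in D_J$ if and only if $b=1$, i.e.\ $W_KdW_J\cap D_J=\{\,ad\mid a\in D_M^K\,\}$. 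Intersecting with the right coset $W_Kd$ and cancelling $d$ gives $\{\,y\in W_K\mid yd\in D_J\,\}=D_M^K$, as required. In particular $\Gamma(\mathscr I_d,M)=\Gamma(D_M^K,M)$, so $(\mathscr I_d,M)=(D_M^K,M)$ as $W_K$-graph ideals.

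Finally I would invoke the discussion preceding the statement. With $\Gamma=\Gamma(D_J,J)$ and $\Gamma_K$ its restriction, the map $y\mapsto yd$ carries $\mathscr I_d=D_M^K$ isomorphically onto the full subgraph of $\Gamma_K$ spanned by $W_Kd\cap\mathscr I$, and this subgraph is a union of cells of $\Gamma_K$. Any cell $X$ of $\Gamma$ is a union of cells of $\Gamma_K$, so $X\cap(W_Kd\cap\mathscr I)$ is a union of cells of $\Gamma_K$ lying inside that subgraph; transporting back along $y\mapsto yd$ shows that its preimage $Xd^{-1}\cap W_K$ is a union of cells of $\Gamma(D_M^K,M)$, which is the assertion. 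The only delicate point is the double coset bookkeeping of the previous paragraph; the transport of cells is then routine, since restriction of a $W\!$-graph to $W_K$ can only refine cells and the relevant isomorphism is given explicitly before the statement.
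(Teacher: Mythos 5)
Your proposal is correct and follows essentially the same route as the paper: the paper's proof consists precisely of applying Theorem~\ref{restrictedWGideal} and the cell-transport discussion preceding the statement to the case \(\mathscr I=D_J\). The only difference is that you prove the identification \(\{\,y\in W_K\mid yd\in D_J\,\}=D_M^K\) inline via the double coset factorisation \(w=adb\) with length additivity, whereas the paper treats this as a known fact (citing \cite[Lemma~2.5]{nguyen:wgbiideals} for it elsewhere); your argument for that step is sound.
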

\begin{remark} Proposition~\ref{cellrestrict} also follows from 
\cite[Proposition 5.7]{howyin:indwg2}, taking \(\Gamma\) to be the
single vertex \(W_J\)-graph corresponding to the trivial representation
of~\(W_J\).
\end{remark}
Assume now that \((W,S)\) is of type~\(A\). Following the terminology of
\cite[Section 5]{kazlus:coxhecke}, for \(x\in W\) define
\(\mathscr{L}(x)=\{\,s\in S\mid sx<x\}\), and let \(\approx\) be the equivalence
relation on \(W\) generated by the relations \(x\approx sx\) for all \(x\in W\) and 
\(s\in S\) such that \(x<sx\) and \(\mathscr{L}(x)\nsubseteq\mathscr{L}(sx)\).
Kazhdan and Lusztig show that \(\approx\) coincides with Kazhdan--Lusztig left equivalence,
so that the equivalence classes are precisely the left cells. Hence to show that
a subset \(X\) of \(W\) is a union of left cells it is sufficient to show that
whenever the relation \(x\approx sx\) holds, \(x\in X\) if and only if \(sx\in X\). 

\begin{lemma}\label{celllemma}
Let \((W,S)\) be a Coxeter system of type~\(A\) and let \(X\subseteq W\). Then
\(X\) is a union of left cells if and only if for all \(s,\,t\in S\) such that
\(st\) has order~\(3\) and all \(d\in W\) such that \(sd>d\) and \(td>d\), the
set \(\{\,y\in W_{\{s,t\}}\mid yd\in X\,\}\) is a union of
left cells in~\(W_{\{s,t\}}\).
\end{lemma}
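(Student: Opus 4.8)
The plan is to prove both implications by reducing, in each direction, to the Kazhdan--Lusztig criterion recalled just above the statement: a subset is a union of left cells precisely when it is closed under every generating move $x\approx sx$ (that is, every $s\in S$ and $x\in W$ with $x<sx$ and $\mathscr{L}(x)\nsubseteq\mathscr{L}(sx)$). The heart of the argument is a single observation that puts the generating moves of $W$ into correspondence with the generating moves of the rank-$2$ parabolic subgroups $W_{\{s,t\}}$ of type $A_2$. Once this correspondence is established, both directions follow almost formally.

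First I would analyse an arbitrary generating move $x\approx sx$ in $W$, so that $x<sx$ and there is some $t\in\mathscr{L}(x)\setminus\mathscr{L}(sx)$. Since $x<sx$ forces $s\notin\mathscr{L}(x)$, we have $t\neq s$, and I claim $s$ and $t$ cannot commute. Indeed, if $m(s,t)=2$ then a short computation in the subgroup $\langle s,t\rangle$ shows that $tx<x$ together with $x<sx$ forces $t(sx)<sx$, i.e. $t\in\mathscr{L}(sx)$, contradicting the choice of $t$. As $(W,S)$ is of type $A$, the only remaining possibility is $m(s,t)=3$, so $K=\{s,t\}$ generates a parabolic subgroup of type $A_2$. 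Writing $x=yd$ with $d$ the minimum-length element of the coset $W_K x$ (so that $sd>d$ and $td>d$) and $y\in W_K$, the length-additivity $l(ry\,d)=l(ry)+l(d)$ for $r\in K$ gives $\mathscr{L}(x)\cap K=\mathscr{L}(y)$, together with $sx=(sy)d$ and $tsx=(tsy)d$. Consequently the inequalities $x<sx$, $tx<x$, $sx<tsx$ translate verbatim into $y<sy$, $ty<y$, $sy<tsy$ in $W_K$; that is, $y\approx_K sy$ is a generating move of $W_K$ whose lift along $d$ recovers $x\approx sx$. Conversely, reversing these identities shows that every generating move $y\approx_K sy$ of $W_K$ lifts, for any $d$ with $sd>d$ and $td>d$, to a generating move $yd\approx (sy)d$ of $W$.

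With this correspondence in hand the two implications are immediate. For the reverse implication, assume every slice $X_d=\{\,y\in W_{\{s,t\}}\mid yd\in X\,\}$ is a union of left cells. Given any generating move $x\approx sx$ in $W$, choose a witness $t$ as above, form $K=\{s,t\}$ and the decomposition $x=yd$; then $y\approx_K sy$, so $y$ and $sy$ lie in a common left cell of $W_K$ and hence $y\in X_d\iff sy\in X_d$, i.e. $x\in X\iff sx\in X$. By the Kazhdan--Lusztig criterion $X$ is a union of left cells. For the forward implication, assume $X$ is a union of left cells, fix $K=\{s,t\}$ with $st$ of order $3$ and $d$ with $sd>d$, $td>d$, and apply the criterion inside $W_K$ (which is itself of type $A$): each generating move $y\approx_K sy$ or $y\approx_K ty$ of $W_K$ lifts to a generating move $yd\approx(sy)d$ of $W$, so the hypothesis on $X$ gives $y\in X_d\iff sy\in X_d$; thus $X_d$ is a union of left cells of $W_K$.

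The main obstacle is the structural claim isolated in the first step, namely that a witness $t$ for a generating move of $W$ must satisfy $m(s,t)=3$, and that passing to the coset decomposition $x=yd$ converts the descent conditions in $W$ exactly into the corresponding descent conditions in $W_K$. This is where type $A$ enters—every pair of non-commuting simple reflections generates a parabolic of type $A_2$—and it is the verification of the commuting case and of the descent-compatibility identities that carries the real content; the remainder is a formal application of the Kazhdan--Lusztig criterion in $W$ and in $W_K$.
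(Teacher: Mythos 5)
Your proof is correct and follows essentially the same route as the paper's: both directions reduce to the rank-2 parabolic \(W_{\{s,t\}}\) via the minimal coset representative \(d\) and length-additivity \(l(yd)=l(y)+l(d)\), and both use the explicit left-cell structure of \(S_3\) together with the Kazhdan--Lusztig generating relation \(\approx\). The only cosmetic difference is in how \(m(s,t)=3\) is established in the converse: you rule out the commuting case by a computation in \(\langle s,t\rangle\), whereas the paper deduces it from the chain \(tx<x<sx<tsx\) and the identity \(l((tst)(tx))=l(tx)+3\), which simultaneously identifies \(d=tx\) as the minimal coset representative.
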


\begin{proof}
Suppose first that \(X\) is a union of left cells. Let \(s,\,t\in S\) with \(st\) of
order~\(3\), and let \(d\in W\) satisfy \(sd>d\) and \(td>d\). We must show that
the set \(X_d=\{\,y\in W_{\{s,t\}}\mid yd\in X\,\}\) is a union of left cells
of~\(W_{\{s,t\}}\). That is, we must show that \(t\in X_d\) if and only if
\(st\in  X_d\), and \(s\in  X_d\) if and only if \(ts\in  X_d\). But since 
\(l(xd)=l(x)+l(d)\) for all \(x\in W_{\{s,t\}}\) it is clear that \(td\approx std\),
since \(l(td)<l(std)\) and \(t\in\mathscr{L}(td)\setminus\mathscr{L}(std)\). So
\(td\) and \(std\) are in the same left cell of~\(W\), and hence
\(td\in X\) if and only if \(std\in X\). Similarly, \(sd\in X\) if and only if
\(tsd\in X\). Thus \(t\in X_d\) if and only if \(st\in X_d\), and
\(s\in X_d\) if and only if \(ts\in X_d\), giving the desired conclusion.

Conversely, assume that for all \(s,\,t\in S\) with \(st\) of order~\(3\) and all
\(d\in W\) with \(sd>d\) and \(td>d\), the set
\(\{\,y\in W_{\{s,t\}}\mid yd\in X\,\}\) is a union of left cells in~\(W_{\{s,t\}}\).
We must show that \(X\) is a union of \(\approx\) equivalence classes. 
It suffices to show that if \(x\in W\) and \(s\in S\)
satisfy \(x<sx\) and \(\mathscr{L}(x)\nsubseteq\mathscr{L}(sx)\), then \(x\in X\)
if and only if \(sx\in X\). Given such elements \(x\) and~\(s\), choose \(t\in S\)
such that \(t\in\mathscr{L}(x)\setminus\mathscr{L}(sx)\). Then \(tx<x<sx<tsx\), so
that \(l((tst)(tx))=l(tsx)=l(tx)+3\). So \(st\) has order~\(3\), and
\(d=tx\) is the shortest element of the coset \(W_{\{s,t\}}d\) (and
\(tsx=tstd\) the longest). Since
\(X_d=\{\,y\in W_{\{s,t\}}\mid yd\in X\,\}\) is a union of left cells
of~\(W_{\{s,t\}}\), it follows that \(t\in X_d\) if and only if~\(st\in X_d\). That is,
\(td\in X\) if and only \(std\in X\), as required.
\qed
\end{proof}

\begin{theorem}\label{generalisedInd}
Let \((W,S)\) be a Coxeter system of type~\(A\), and let
\((\mathscr{I},J)\) be a \(W\!\)-graph ideal. Then \(\mathscr{I}w_{J}\) is a
union of Kazhdan--Lusztig left cells.
\end{theorem}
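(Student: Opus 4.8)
The plan is to reduce to the rank-two case with Lemma~\ref{celllemma} and then feed the classification of type-\(A_2\) ideals through the restriction result, Theorem~\ref{restrictedWGideal}. Set \(X=\mathscr I w_J\). By Lemma~\ref{celllemma} it is enough to fix \(s,t\in S\) with \(st\) of order~\(3\), put \(K=\{s,t\}\) (so that \(W_K\) is of type \(A_2\)), fix \(d\in W\) with \(sd>d\) and \(td>d\), and prove that
\[
  X_d=\{\,y\in W_K\mid yd\in\mathscr I w_J\,\}
\]
is a union of left cells of \(W_K\). Because \(w_J\) is an involution, \(yd\in\mathscr I w_J\) if and only if \(ydw_J\in\mathscr I\), so \(X_d=\{\,y\in W_K\mid y(dw_J)\in\mathscr I\,\}\). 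If this set is empty there is nothing to prove, so assume it is nonempty.

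First I would pass to the minimal-length element \(d'\) of the left coset \(W_K(dw_J)\) and write \(dw_J=ud'\) with \(u\in W_K\). Some element \(y(dw_J)=(yu)d'\) lies in \(\mathscr I\); since \(d'\leqslant\lside(yu)d'\) and \(\mathscr I\) is an ideal of \((W,\leqslant\lside)\), it follows that \(d'\in\mathscr I\). As \(d'\) is minimal in \(W_Kd'\) we have \(d'\in D_K^{-1}\), and recalling that \(\mathscr I\subseteq D_J\) for every \(W\!\)-graph ideal (equivalently, that \(z\mapsto zw_J\) adds lengths on \(\mathscr I\)) we also get \(d'\in D_J\). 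Hence \(d'\) is \((K,J)\)-reduced, i.e.\ it is the minimal-length representative of the double coset \(W_KdW_J\). Theorem~\ref{restrictedWGideal} now applies and shows that \((\mathscr I_{d'},M)\) is a \(W_K\)-graph ideal, where \(\mathscr I_{d'}=\{\,z\in W_K\mid zd'\in\mathscr I\,\}\) and \(M=K\cap d'J(d')^{-1}\); moreover, because \(d'\) is reduced, \(W_M=W_K\cap d'W_J(d')^{-1}\) and conjugation by \(d'\) is a Coxeter isomorphism \(W_N\to W_M\), with \(N=J\cap(d')^{-1}Kd'\), carrying \(w_N\) to \(w_M\). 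Unwinding the definitions gives \(X_d=\{\,y\in W_K\mid yu\in\mathscr I_{d'}\,\}=\mathscr I_{d'}u^{-1}\).

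The crux is then the identity \(u=w_M\). From \(dw_J=ud'\) we get \(d=ud'w_J\), so \(W_Kd=W_Kd'w_J\) and \(d\) is the minimal-length representative of \(W_Kd'w_J\). Using the standard factorization attached to the reduced representative \(d'\), the minimal representatives of the left \(W_K\)-cosets inside \(W_Kd'W_J\) are the elements \(d'\beta\) with \(\beta\) minimal in its right \(W_N\)-coset, and the coset \(W_Kd'w_J\) corresponds to \(W_Nw_J\). Since \(w_J\) is the longest element of \(W_J\supseteq W_N\), the minimal element of \(W_Nw_J\) is \(w_Nw_J\), so \(d=d'w_Nw_J\). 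Therefore \(dw_J=d'w_N\) and
\[
  u=dw_J(d')^{-1}=d'w_N(d')^{-1}=w_M .
\]

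Finally, since \(w_M\) is an involution, \(X_d=\mathscr I_{d'}w_M\); and because \((\mathscr I_{d'},M)\) is a \(W_K\)-graph ideal of the type-\(A_2\) group \(W_K\), Lemma~\ref{idealA2} together with Remark~\ref{idealA2rem} shows that \(\mathscr I_{d'}w_M\) is a union of left cells of \(W_K\). This is exactly the conclusion required by Lemma~\ref{celllemma}, so \(\mathscr I w_J\) is a union of left cells. I expect the main obstacle to be the double-coset identity \(u=w_M\): the rest is coset bookkeeping combined with results already in hand, whereas this step is where one must use that \(d'\) is \((K,J)\)-reduced and that right multiplication by \(w_J\) sends the trivial \(W_N\)-coset to the coset whose minimal element is \(w_Nw_J\). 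It is also the point that genuinely relies on the type-\(A\) hypothesis, since only there is every rank-two parabolic attached to a braid relation of type~\(A_2\), so that Remark~\ref{idealA2rem} is available.
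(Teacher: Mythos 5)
Your proposal is correct and follows essentially the same route as the paper: reduce to rank two via Lemma~\ref{celllemma}, take the minimal-length element \(d'\) of \(W_K dw_J\) (the paper's \(e\)), use the ideal property to get \(d'\in\mathscr I\subseteq D_K^{-1}\cap D_J\), apply Theorem~\ref{restrictedWGideal}, and conclude with Remark~\ref{idealA2rem}. Your identity \(u=w_M\) is exactly the paper's \(d=w_Lew_J\) in different notation, and your double-coset justification of it is, if anything, slightly more detailed than the paper's.
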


\begin{proof} We use Lemma~\ref{celllemma}. Accordingly,
let \(s,\,t\in S\) be such that \(st\) has order~\(3\) and let \(d\in W\) be
such that \(sd>d\) and \(td>d\). Put \(K=\{s,t\}\) and
\(Y=\{\,y\in W_K\mid yd\in\mathscr{I}w_J\,\}\). We show that \(Y\) is a union of
left cells in~\(W_K\), noting first that this certainly holds if \(Y=\emptyset\).

Assume that \(Y\ne\emptyset\). Let \(e\) be the minimal length element in the 
right coset \(W_Kdw_J\). Since \(\mathscr{I}\cap W_Kdw_J = Ydw_J\)
is nonempty and \(\mathscr{I}\) is an ideal of \((W,\leqslant\lside)\) it follows that
\(e\in\mathscr{I}\), and hence
\(e\in D_K^{-1}\cap\mathscr{I}\subseteq D_K^{-1}\cap D_J\). By
Theorem~\ref{restrictedWGideal} it follows that \((\mathscr{I}_e,L)\) is
a \(W_K\)-graph ideal, where \(\mathscr{I}_e=\{\,y\in W_K\mid ye\in\mathscr{I}\,\}\)
and \(L=K\cap eJe^{-1}\). Hence \(\mathscr{I}_ew_L\) is a union of left cells
in \(W_K\) by Remark~\ref{idealA2rem}.

It remains to observe that \(\mathscr{I}_ew_L=Y\), and since
\(\mathscr{I}_ew_L=\{\,y\in W_K\mid yw_Le\in\mathscr{I}\,\}\) it suffices
to show that \(d=w_Lew_J\). Since \(e\) is the minimal length element in
\(W_KeW_J\) and \(L=K\cap eJe^{-1}\) it follows that \(w_Lew_J\) is the
minimal length element in \(W_Kew_J=W_Kd\), and hence \(d=w_Lew_J\),
as required.\qed
\end{proof}

\begin{remark}\label{D_Jcells}
It was shown in \cite[Proposition 5.13]{nguyen:wgbiideals} that
if \(J\) is any subset of~\(S\) then \(\Gamma(D_J,J)\) is isomorphic to the
full subgraph of \(\Gamma(W,\emptyset)\) spanned by the vertices corresponding
to elements of~\(D_Jw_J\), via the obvious bijection~\(D_J\to D_Jw_J\).
Moreover, \(D_Jw_J\) is a union of left cells of~\(W\),
and \(X\subseteq D_Jw_J\) is a left cell of~\(W\) if and only if
\(\{\,w\in D_J\mid ww_J\in X\,\}\) is a cell of \((D_J,J)\). Thus 
Theorem~\ref{generalisedInd} tells us that if \((W,S)\) is of type~\(A\)
and \((\mathscr{I}\!,\,J)\) is a \(W\!\)-graph ideal then \(\mathscr{I}\) is
a union of cells of \((D_J,J)\).
Now by \cite[Theorem 5.2]{nguyen:wgbiideals}, if \(\cal{C}\) is a set of cells
of \((D_J,J)\) that is upward closed with respect to the Kazhdan--Lusztig
partial order on cells, then the union of the cells in \(\cal{C}\) is
an ideal of \((W,\leqslant\lside)\) and is a \(W\!\)-graph ideal with respect
to~\(J\). (These are the strong \(W\!\)-graph subideals of \((D_J,J)\).)
We conjecture that in type~\(A\) all \(W\!\)-graph ideals have this form.

It has been shown by calculation that when \(W\) is of type \(A_5\) there exist
ideals of \((W,\leqslant\lside)\) that are unions of left cells but are not
\(W\!\)-graph ideals. It is well known that in this case the Robinson-Schensted
map \(w\mapsto (P(w),Q(w))\) is a bijection from \(W\) to the set of ordered
pairs of standard tableaux on \(\{1,2,3,4,5,6\}\), and that for each such standard
tableau~\(t\) the set \(\{\,w\in W\mid Q(w)=t\,\}\) is a left cell of~\(W\).
(See \cite[Theorem A]{ariki:RSleftcells}.) Using the computational algebra system
Magma (see~\cite{Magma}), R.~B.~Howlett has shown that if \(\mathscr I\) is the union
of the left cells corresponding to the tableaux listed below then \(\mathscr I\)
is an ideal of~\((W,\leqslant\lside)\), but \((\mathscr I\!,\,\emptyset)\) is not
a \(W\!\)-graph ideal.
\ytableausetup{aligntableaux=center}
\begin{gather*}\advance\belowdisplayskip-2 pt
\text{\small \begin{ytableau}
1&2&3&6\\4&5
\end{ytableau}\ ,
\qquad
\begin{ytableau}
1&2&5&6\\3&4
\end{ytableau}\ ,
\qquad
\begin{ytableau}
1&2&4&5\\3&6
\end{ytableau}
\qquad
\begin{ytableau}
1&2&5\\3&4&6
\end{ytableau}\ ,}
\\
\text{\small \begin{ytableau}
1&2&4&5\\3&6
\end{ytableau}\ ,
\qquad
\begin{ytableau}
1&2&3&5&6\\4
\end{ytableau}\ ,
\qquad
\begin{ytableau}
1&2&4&5&6\\3
\end{ytableau}\ ,
\qquad
\begin{ytableau}
1&2&3&5\\4&6
\end{ytableau}\, ,}
\\
\text{\small \begin{ytableau}
1&2&3&4&5&6
\end{ytableau}\ ,
\qquad
\begin{ytableau}
1&2&5\\3&6\\4
\end{ytableau}\ ,
\qquad
\begin{ytableau}
1&2&5&6\\3\\4
\end{ytableau}\ ,
\qquad
\begin{ytableau}
1&2&3&4&5\\6
\end{ytableau}\ .}
\end{gather*}
\end{remark}

\section{\textit{W-}graphs derived from skew partitions}
\label{sec:4}
For each positive integer~\(n\) we define \(W_n\) to be the symmetric
group on the set \(\{1,2,\ldots,n\}\) and \(S_n=\{\,s_i\mid 1\leqslant i<n\,\}\),
where \(s_i\) is the transposition~\((i,i+1)\). Thus \((W_n,S_n)\) is
a Coxeter system of type~\(A_{n-1}\). If \(l\) and \(m\)
are positive integers then we define \(W_{[l,m]}\) to be the set of
all permutations of \([l,m]=\{\,i\in\mathbb{Z}\mid l\leqslant i\leqslant m\,\}\).
If \([l,m]\subseteq[1,n]\) then \(W_{[l,m]}\) can be regarded as a standard
parabolic subgroup of~\(W_n\) (generated by \(\{\,s_i\in S_n\mid l\leqslant i<m\,\}\)). We
use a left operator convention for permutations,
writing \(wi\) for the image of~\(i\) under the permutation~\(w\).

A \textit{partition\/}~of \(n\) is a sequence of positive integers
\(\lambda_{1}\geqslant\lambda_{2}\geqslant\cdots\geqslant\lambda_{k}\) with \(\sum_{i=1}^k\lambda_i= n\).
The \(\lambda_i\) are called the \textit{parts\/} of the partition. We adopt the
convention that if \(\lambda\) is a partition with \(k\) parts then \(\lambda_i\)
denotes the \(i\)-th part of~\(\lambda\) if \(i\in\{1,2,\ldots k\}\), and
\(\lambda_i=0\) if \(i>k\). The \textit{Young diagram\/} of \(\lambda\) is
the set
\[
\advance\abovedisplayskip-2 pt\advance\belowdisplayskip-2 pt
[\lambda]=\{\,(i,j)\mid1\leqslant i\leqslant k\text{ and }1\leqslant j\leqslant\lambda_{i}\,\},
\]
represented pictorially as a left-justified array of boxes with
\(\lambda_i\) boxes in the \(i\)-th row from the top. We define
\(P(n)\) to be the set of all partitions of~\(n\).

If \(\lambda\in P(n)\) then \(\lambda^*\) denotes the \textit{conjugate\/} of \(\lambda\),
defined to be the partition whose diagram is the transpose of~\([\lambda]\).
That is, \([\lambda^*]=\{\,(j,i)\mid (i,j)\in [\lambda]\,\}\).

A \textit{skew partition of \(n\)} is an ordered pair \(\lambda/\mu\) such that
\(\lambda\in P(m+n)\) and~\(\mu\in P(m)\) for some nonnegative integer~\(m\), and
\(\lambda_i\geqslant\mu_i\) for all~\(i\). We write \(\lambda/\mu\vdash n\) to indicate
that \(\lambda/\mu\) is a skew partition of \(n\). In the case \(m=0\) we identify
\(\lambda/\mu\) with \(\lambda\).

The \textit{skew diagram\/} \([\lambda/\mu]\) corresponding  to a skew partition
\(\lambda/\mu\) is defined to be the complement of
\([\mu]\) in \([\lambda]\). That is,
\[
\advance\abovedisplayskip-2 pt\advance\belowdisplayskip-2 pt\postdisplaypenalty=10000
[\lambda/\mu]=\{\,(i,j)\mid (i,j)\in[\lambda]\text{ and }(i,j)\notin[\mu]\,\}.
\]
We say that \([\lambda/\mu]\) has \(\lambda_1\) columns and \(\lambda_1^*\) rows,
and that \([\lambda/\mu]\) is \textit{basic\/} if all rows and columns are nonempty.
Thus \([\lambda/\mu]\) is basic if \(\lambda_i>\mu_i\) and \(\lambda_j^*>\mu_j^*\)
for all \(i\leqslant \lambda_1^*\) and \(j\leqslant\lambda_1\).

If \(\lambda/\mu\vdash n\) then a \textit{skew tableau of shape \(\lambda/\mu\)},
or \((\lambda/\mu)\)-tableau, is a bijective map \(t\colon[\lambda/\mu]
\rightarrow\mathcal{A}\), where \(\mathcal{A}\) is a totally ordered
set with~\(n\) elements. We call \(\mathcal{A}\) the \textit{target\/} of~\(t\).
In this paper the target will always be an interval \([m+1,m+n]\), with \(m=0\)
unless otherwise specified. For each \(a\in\mathcal{A}\) we define \(\row(t,a)\)
and \(\col(t,a)\) to be the row index and column index of \(a\)~in~\(t\), so that
\(t^{-1}(a)=(\row(t,a),\col(t,a))\).
We say that \(t\) is \textit{row standard\/}
if its entries increase across the rows, \textit{column standard\/} if
its entries increase down the columns, and \textit{standard\/} if
it is both row standard and column standard.

We define
\(\Tab_m(\lambda/\mu)\) to be the set of all \((\lambda/\mu)\)-tableaux
with target~\([m+1,m+n]\), and
\(\STD_m(\lambda/\mu)=\{\,t\in\Tab_m(\lambda/\mu)\mid t\text{ is standard}\,\}\).
The subscript \(m\) is usually
omitted if \(m=0\). If \(h\in\mathbb{Z}\) and \(t\in\Tab_m(\lambda/\mu)\) then 
we define \(h+t\in\Tab_{h+m}(\lambda/\mu)\) to be the tableau obtained by adding
\(h\) to all entries of~\(t\). We define \(\tab^{\lambda/\mu}\in \STD(\lambda/\mu)\)
to be the \((\lambda/\mu)\)-tableau given by
\begin{equation}\label{toptab}
\tab^{\lambda/\mu}(i,j)=j-\mu_i+\sum_{h=1}^{i-1}(\lambda_h-\mu_h)
\end{equation}
for all \((i,j)\in[\lambda/\mu]\).
That is, the numbers
\(1,\,2,\,\dots,\,(\lambda_1-\mu_1)\) fill the first row of
\(\tab^{\lambda/\mu}\) in order from left to right, then the numbers
\((\lambda_1-\mu_1)+1,\,(\lambda_1-\mu_1)+2,\,\dots,\,(\lambda_1-\mu_1)+(\lambda_2-\mu_2)\)
similarly fill the second row, and so on. We also define \(\tab_{\lambda/\mu}\)
to be the standard \((\lambda/\mu)\)-tableau that is the transpose of the
\((\lambda^*/\mu^*)\)-tableau \(\tab^{\smash{\lambda^*/\mu^*}}\)
(so that the numbers \(1\) to \(n\) fill the columns of
\(\tab_{\lambda/\mu}\) in order from left to right).

It is clear that if \(\lambda/\mu\vdash n\) then
\(W_{[m+1,m+n]}\) acts on \(\Tab_m(\lambda/\mu)\), via
\((wt)(i,j) = w(t(i,j))\) for all \((i,j)\in[\lambda/\mu]\), all
\(t\in\Tab_m(\lambda/\mu)\) and all \(w \in W_{[m+1,m+n]}\). Thus \(w\mapsto
w(m+\tab_{\lambda/\mu})\) gives a bijective map from \(W_{[m+1,m+n]}\) to
\(\Tab_m(\lambda/\mu)\). We define \(\perm{:}\,\Tab_m(\lambda/\mu)\to W_{[m+1,m+n]}\)
to be the map inverse to \(w\mapsto w(m+\tab_{\lambda/\mu})\), and use \(\perm\)
to transfer the Bruhat order and the left weak order from \(W_{[m+1,m+n]}\) to
\(\Tab_m(\lambda/\mu)\). That is,
for all \(t_1,\,t_2\in\Tab_m(\lambda/\mu)\) we write
\(t_1\leqslant t_2\) if and only if \(\perm(t_1)\leqslant \perm(t_2)\), and
\(t_1\leqslant\lside t_2\) if and only if \(\perm(t_1)\leqslant\lside\perm(t_2)\).

\begin{remark}
If \(\lambda/\mu\vdash n\) and \(t\in\STD(\lambda/\mu)\) then the 
\textit{reading word\/} of \(t\) is defined to be the sequence
\((a_1,a_2,\ldots,a_n)\) obtained by concatenating the rows of \(t\)
in order from last row to first row. So there is a bijection \(\STD(\lambda/\mu)\to W_n\)
that maps each \(t\) to the permutation \(\mathrm{word}(t)\) defined by
\(i\mapsto a_i\) for all \(i\in\{1,2,\ldots,n\}\). This bijection and our bijection
\(\perm\) are related by the equation
\(\perm(t)=\mathrm{word}(t)w^{-1}\), where \(w=\mathrm{word}(\tab_{\lambda/\mu})\).
\end{remark}
Whenever \(\lambda/\mu\vdash n\) we define
\(J_{\lambda/\mu}\) to be the subset of \(S_n\) consisting of those
\(s_i\) such that \(i\) and \(i+1\) lie in the same
column of~\(\tab_{\lambda/\mu}\), and we define \(W_{\lambda/\mu}\) to be the
standard parabolic subgroup of \(W_n\) generated by~\(J_{\lambda/\mu}\).
Thus \(W_{\lambda/\mu}\) is the column group of~\(\tab_{\lambda/\mu}\).
Moreover, the set
\(D=\{\,d\in W_n\mid di<d(i+1)\text{ whenever \(s_i\in
J_{\lambda/\mu}\)}\,\}\)
is the set of minimal length representatives of the left cosets of
\(W_{\lambda/\mu}\) in \(W_n\), since the condition \(di<d(i+1)\) is equivalent to
\(l(ds_i)>l(d)\).
Thus \(\{\,d\tab_{\lambda/\mu}\mid d\in D\,\}\) is precisely the set
of column standard \((\lambda/\mu)\)-tableaux.

We shall make use of the following result, which was was proved
in \cite{nguyen:wgideals2}.

\begin{theorem}\textup{\cite[Theorem 6.6]{nguyen:wgideals2}}\label{wdetelmA}
Let \(\lambda\in\mathcal P(n)\) and \(J=J_\lambda\), so that \(W_J=W_\lambda\)
is the column group of~\(\tab_\lambda\). Then
\(y\in W_n\) given by \(y\tab_\lambda=\tab^\lambda\) is a \(W_n\)-graph
determining element (with respect to \(J\)), and its \(W_n\)-graph is isomorphic to the
\(W_n\)-graph of the left cell that contains~\(w_{J}\).
\end{theorem}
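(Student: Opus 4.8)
The plan is to deduce both assertions from a single explicit identification: I would exhibit a length-additive bijection \(\mathscr{I}(y)\to C\), where \(C\) is the left cell of \(W_n\) containing \(w_J\), and then promote it to an isomorphism of \(W_n\)-graphs. The combinatorial heart of the argument is the description of \(\mathscr{I}(y)\) and of \(C\) in terms of tableaux, and the technical heart is the matching of the edge weights.

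First I would pin down the two sides combinatorially. Transporting the left weak order to tableaux through \(\perm\), one has \(\perm(\tab_\lambda)=1\) and \(\perm(\tab^\lambda)=y\), so \(\mathscr{I}(y)\) corresponds to \(\{\,t\in\Tab(\lambda)\mid t\leqslant\lside\tab^\lambda\,\}\). I would prove that this set is exactly \(\STD(\lambda)\): that \(\tab^\lambda\) is the unique maximal element of \(\STD(\lambda)\) in the left weak order, and conversely that any tableau \(\leqslant\lside\tab^\lambda\) is again standard, so that \(\STD(\lambda)\) is precisely the principal weak-order ideal generated by \(\tab^\lambda\). This yields the bijection \(\perm\colon\STD(\lambda)\to\mathscr{I}(y)\) asserted alongside the theorem, and in particular \(|\mathscr{I}(y)|=f^\lambda\). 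On the other side, since \(w_J\) is the longest element of the column group, Robinson--Schensted insertion of its one-line word returns the recording tableau \(Q(w_J)=\tab_\lambda\); hence by \cite[Theorem A]{ariki:RSleftcells} the left cell of \(w_J\) is \(C=\{\,w\in W_n\mid Q(w)=\tab_\lambda\,\}\), again of size \(f^\lambda\).

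With these descriptions in hand I would show that right multiplication \(v\mapsto vw_J\) carries \(\mathscr{I}(y)\) onto \(C\). For each \(v\in\mathscr{I}(y)\) the tableau \(\perm^{-1}(v)\) is standard, hence column standard, so \(v\) is the minimal-length representative of \(vW_J\) and \(l(vw_J)=l(v)+l(w_J)\); the substantive point is the Robinson--Schensted computation \(Q(vw_J)=\tab_\lambda\), which follows from the effect of appending the longest column-group element to the recording tableau. Injectivity is immediate from length additivity, and the equal cardinalities \(f^\lambda\) then force the map onto \(C\). Under the identification of Remark~\ref{D_Jcells}, in which \(\Gamma(D_J,J)\) is the full subgraph of the Kazhdan--Lusztig graph \(\Gamma(W_n,\emptyset)\) spanned by \(D_Jw_J\) via \(d\mapsto dw_J\), this already exhibits \(\mathscr{I}(y)\) as a single cell of \((D_J,J)\), matching the left cell \(C\).

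It remains to see that \((\mathscr{I}(y),J)\) is itself a \(W_n\)-graph ideal and that the labelled graph the algorithm of \cite{howvan:wgraphDetSets} attaches to it is isomorphic, through \(v\mapsto vw_J\), to the cell graph on \(C\). I would check directly that this map preserves the two pieces of \(W\)-graph data: the vertex colour assigned to \(v\) (computed from \(\mathscr{L}(v)\) and \(J\)) agrees with \(\mathscr{L}(vw_J)\), which is a routine consequence of \(w_J\) being the longest element of \(W_J\) together with \(v\in D_J\); and the integer edge weights agree. This last point is the main obstacle: the weights on \(\Gamma(\mathscr{I}(y),J)\) are generated by the recursion of \cite{howvan:wgraphDetSets}, whereas the weights on \(C\) are genuine Kazhdan--Lusztig \(\mu\)-coefficients, and one must show that the two recursions coincide along the (length-additive, colour-preserving) bijection. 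I expect to settle this by an induction on length showing that the algorithm reproduces the Kazhdan--Lusztig recursion for \(C\) term by term, or—more robustly—by an induction on \(n\) that peels off the box of \(\tab_\lambda\) holding the largest entry to reduce to a smaller straight shape, with Theorem~\ref{restrictedWGideal} controlling the passage to parabolic subgroups and Lemma~\ref{idealA2} together with Remark~\ref{idealA2rem} supplying the rank-one and rank-two base cases. Once the labelled graphs are matched, the cell graph being a genuine \(W_n\)-graph forces \(\Gamma(\mathscr{I}(y),J)\) to be one as well; thus \(y\) is a \(W_n\)-graph determining element with respect to \(J\) and its \(W_n\)-graph is the cell graph of \(w_J\), as claimed.
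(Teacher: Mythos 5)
You should first be aware that the paper contains no proof of this statement: Theorem~\ref{wdetelmA} is imported verbatim from \cite[Theorem~6.6]{nguyen:wgideals2}, and the present paper only records it for later use. So your attempt can only be judged on its own terms. The combinatorial bookkeeping in your first two paragraphs is sound and matches what the paper itself uses elsewhere: the identification \(\mathscr{I}(y)\leftrightarrow\STD(\lambda)\) is Lemma~\ref{stdideal} (in the straight-shape case, \cite[Lemma~6.2]{howvan:wgraphDetSets}), the description of the cell via \(Q(w)=\tab_\lambda\) is \cite[Theorem~A]{ariki:RSleftcells}, and the passage \(v\mapsto vw_J\) between \(\mathscr{I}(y)\) and the cell containing \(w_J\) is exactly the chain of identifications in Remarks~\ref{D_Jcells} and~\ref{strongspecht}. (Your assertion \(Q(vw_J)=\tab_\lambda\), justified only by ``the effect of appending the longest column-group element,'' is not a proof, but it is fixable.)

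The genuine gap is in the final paragraph, which is where the entire content of the theorem lives. Being a \(W\!\)-graph ideal is a strong existence statement about the output of the algorithm of \cite{howvan:wgraphDetSets} (well-definedness of the polynomials and of the edge weights, and the resulting module satisfying the defining relations); you cannot ``check directly that the map preserves the edge weights'' before you know the weights on \(\Gamma(\mathscr{I}(y),J)\) exist, and proving that the algorithm's recursion reproduces the Kazhdan--Lusztig \(\mu\)-coefficients is essentially equivalent to the theorem itself, so the step ``once the labelled graphs are matched, the cell graph being a genuine \(W_n\)-graph forces \(\Gamma(\mathscr{I}(y),J)\) to be one as well'' is circular. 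The proposed rescue by induction also has no engine: Theorem~\ref{restrictedWGideal} only passes \emph{from} a \(W\!\)-graph ideal \emph{to} its parabolic restrictions, and nothing in this paper (or in your sketch) provides the converse globalization step that an induction on \(n\) would require; Lemma~\ref{idealA2} and Remark~\ref{idealA2rem} are likewise only usable once one already has a \(W\!\)-graph ideal in hand. A correct proof has to verify the defining conditions of a \(W\!\)-graph determining element for \(y\) directly (as is done in \cite{nguyen:wgideals2}), and that work is absent here.
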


\begin{remark}\label{strongspecht}
In our present notation, Theorem~\ref{wdetelmA} says that
\((\mathscr{I}\!,\,J)=(\mathscr{I}(\perm(\tab^\lambda)),J_\lambda)\) is a
\(W_n\)-graph ideal, and that \(\mathscr{I}\) is isomorphic to the left
cell of~\(W_n\) containing~\(w_J\). As explained in Remark~\ref{D_Jcells} above,
the left cell of~\(W_n\) containing~\(w_J\) is isomorphic to the cell of
\((D_J,J)\) containing~\(1\). By \cite[Proposition~6.5]{nguyen:wgideals2},
this cell coincides with~\(\mathscr{I}\). Now if \(X\) is an arbitrary
cell of~\((D_J,J)\) then \(X\leqslant\mathscr{I}\), since
\(1\leqslant\lside d\) for all~\(d\in D_J\). Thus \(\{\mathscr{I}\}\) is an
upward-closed set of cells. So in fact the results of
\cite{nguyen:wgideals2} show that \((\mathscr{I}\!,\,J)\) is a strong
\(W\!\)-graph subideal of~\((D_J,J)\).
\end{remark}
The following result is a straightforward generalization of~\cite[Lemma 6.2]{howvan:wgraphDetSets}.
\begin{lemma}\label{stdideal}
Let \(\lambda/\mu\vdash n\) and let \(\mathscr{I}=\mathscr{I}(\perm(\tab^{\lambda/\mu}))\),
the ideal of \((W_n,\leqslant\lside)\) generated by \(\perm(\tab^{\lambda/\mu})\).
Then \(\STD(\lambda/\mu)=\{\,w\tab_{\lambda/\mu} \mid w\in\mathscr{I}\,\}
=\{\,t \in \Tab(\lambda/\mu) \mid t\leqslant\lside \tab^{\lambda/\mu}\,\}\).
\end{lemma}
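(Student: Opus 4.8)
The plan is to transport the statement to $\Sym(n)=W_n$ and settle it by comparing inversion sets. The second equality is purely formal: by the definition of $\perm$ we have $\perm(w\tab_{\lambda/\mu})=w$ for every $w\in W_n$, and the left weak order on $\Tab(\lambda/\mu)$ was defined by transport along $\perm$; hence $w\mapsto w\tab_{\lambda/\mu}$ maps $\mathscr{I}=\{\,w\mid w\leqslant\lside\perm(\tab^{\lambda/\mu})\,\}$ bijectively onto $\{\,t\in\Tab(\lambda/\mu)\mid t\leqslant\lside\tab^{\lambda/\mu}\,\}$. So everything reduces to the first equality, that is, to showing that $t\in\Tab(\lambda/\mu)$ is standard if and only if $t\leqslant\lside\tab^{\lambda/\mu}$.

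The engine is the inversion-set description of the left weak order on $\Sym(n)$: with the convention that $wi$ denotes the image of $i$, one has $y\leqslant\lside x$ if and only if $\mathrm{Inv}(y)\subseteq\mathrm{Inv}(x)$, where $\mathrm{Inv}(w)=\{\,(i,j)\mid i<j,\ wi>wj\,\}$. Now $\perm(t)$ is characterised by $\perm(t)\bigl(\tab_{\lambda/\mu}(b)\bigr)=t(b)$ for every box $b$, so the one-line word of $\perm(t)$ records the entries of $t$ in the column-reading order in which $\tab_{\lambda/\mu}$ fills the diagram. Consequently $\mathrm{Inv}(\perm(t))$ is naturally identified with the set of pairs of boxes $(b,b')$ such that $b$ precedes $b'$ in this column order but $t(b)>t(b')$. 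Applying this to $\tab^{\lambda/\mu}$ and comparing the columns-then-rows order of $\tab_{\lambda/\mu}$ with the rows-then-columns order of $\tab^{\lambda/\mu}$, I would record that $\mathrm{Inv}(\perm(\tab^{\lambda/\mu}))$ consists exactly of the pairs $\bigl((i,j),(i',j')\bigr)$ with $j<j'$ and $i>i'$ --- the pairs in which the first box lies strictly below and strictly to the left of the second.

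It then remains to match these two inversion sets against standardness. For the implication $t\leqslant\lside\tab^{\lambda/\mu}\Rightarrow t$ standard I would argue by contraposition: a failure of row-standardness produces two boxes in a common row, and a failure of column-standardness produces two boxes in a common column, that in either case form an inversion of $\perm(t)$ lying outside the below-and-left set above, so the required containment fails. For the converse, let $t$ be standard and let $\bigl((i,j),(i',j')\bigr)$ be any inversion of $\perm(t)$; the column order forces $j<j'$, or else $j=j'$ and $i<i'$, and column-standardness excludes the latter while row-standardness excludes the subcase $i=i'$. The one remaining subcase is $j<j'$ with $i<i'$, and this is the real obstacle: I must show that in a standard skew tableau the inequalities $i<i'$ and $j<j'$ force $t(i,j)<t(i',j')$, contradicting $t(i,j)>t(i',j')$. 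I would prove this monotonicity by producing an explicit strictly increasing chain of entries inside $[\lambda/\mu]$ from $(i,j)$ to $(i',j')$: because $\lambda$ and $\mu$ are weakly decreasing, the box $(i,j')$ and all the boxes $(i,j'),(i+1,j'),\dots,(i',j')$ of column $j'$ lie in the diagram, and row- and column-standardness make the entries increase first along row $i$ and then down column $j'$. This is exactly the point where the skew case goes beyond the straight-shape argument, since columns may now have gaps, but the relevant vertical segment has none. Hence every inversion of $\perm(t)$ is of the permitted type, giving $\mathrm{Inv}(\perm(t))\subseteq\mathrm{Inv}(\perm(\tab^{\lambda/\mu}))$ and therefore $t\leqslant\lside\tab^{\lambda/\mu}$, which completes the argument.
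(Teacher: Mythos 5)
Your proof is correct. Note that the paper itself gives no argument for this lemma: it is stated as a ``straightforward generalization'' of \cite[Lemma 6.2]{howvan:wgraphDetSets}, with a further pointer to Theorems 7.2 and 7.5 of \cite{BjoWachs:quotientCox}, so there is nothing to compare line by line; what you have supplied is a self-contained proof in the spirit of the Bj\"orner--Wachs generalized-quotient description. The one genuinely delicate point is matching the correct inversion set to the paper's conventions: with the left-operator convention and the left weak order \(y\leqslant\lside x\iff l(xy^{-1})=l(x)-l(y)\), the relevant criterion is containment of the sets \(\{\,(i,j)\mid i<j,\ wi>wj\,\}\) of \emph{position} inversions (equivalently, of the right associated reflections \(\{\,t\mid l(wt)<l(w)\,\}\)), and you have chosen the right one; your identification of \(N(\perm(\tab^{\lambda/\mu}))\) with the strictly-below-and-strictly-left pairs of boxes, and the row-then-column increasing chain from \((i,j)\) to \((i',j')\) inside \([\lambda/\mu]\), are both sound, the latter because \(\mu_k\leqslant\mu_i<j<j'\leqslant\lambda_{i'}\leqslant\lambda_k\) for \(i\leqslant k\leqslant i'\) keeps the whole path in the diagram. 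One small inaccuracy, harmless to the argument: columns of a skew diagram of a pair of partitions are always contiguous intervals of rows (column \(j\) occupies rows \(\mu_j^*+1,\dots,\lambda_j^*\)), so your aside that ``columns may now have gaps'' is not the actual difficulty; the point you in fact need and do verify is that the intermediate box \((i,j')\) and the segment below it lie in \([\lambda/\mu]\).
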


\begin{remark}
The fact that the set of standard \((\lambda/\mu)\)-tableaux is in one-to-one
correspondence with the ideal of \((W_{n},\leqslant\lside)\) generated by the
element \(\perm(\tab^{\lambda/\mu})\) is included, with other results,
in Theorems 7.2 and 7.5 of~\cite{BjoWachs:quotientCox}.
\end{remark}
\begin{definition}
We call \(m+\tab^{\lambda/\mu}\)
the \textit{maximal tableau\/} in \(\STD_m(\lambda/\mu)\).
\end{definition}
Let \(\lambda /\mu\vdash n\) and \(t\in\STD_m(\lambda/\mu)\).
For each \(k\in [m+1,m+n]\) we define \(\lessthan tk\) to be the skew tableau
obtained by removing from \(t\) all boxes filled with entries greater than or
equal to~\(k\). Thus \(\lessthan tk\in\STD_m(\kappa/\mu)\), where
\([\kappa]=[\lambda]\setminus\{\,(i,j)\in[\lambda/\mu] \mid t(i,j)\geqslant k\,\}\), 
and \(\lessthan tk\) is the restriction of \(t\)
to \([\kappa/\mu] = \{\,(i,j)\in[\lambda/\mu] \mid t(i,j)< k\,\}\). Clearly
\(\kappa/\mu\vdash(k-m-1)\). Similarly,
we define \(\morethan tk\) to be the skew tableau obtained by removing
all boxes with entries less than or equal to~\(k\), so that
\(\morethan tk\in\STD_k(\lambda/\nu)\) where
\([\nu]=[\mu]\cup\{\,(i,j)\in[\lambda/\mu] \mid t(i,j)\leqslant k\,\}\), 
and \(\morethan tk\) is the restriction of \(t\)
to \([\lambda/\nu] = \{\,(i,j)\in[\lambda/\mu] \mid t(i,j)> k\,\}\).
Clearly \(\lambda/\nu\vdash(m+n-k)\).

We can now show that Theorem~\ref{wdetelmA} generalizes to skew partitions
in the natural way.

\begin{theorem}\label{finalThr}
Let \(\lambda/\mu\vdash n\) and \(M=J_{\lambda/\mu}\).
Then \((\mathscr{I}(\perm(\tab^{\lambda/\mu})),M)\) is a strong
\(W_n\)-graph subideal of \((D_M,M)\).
\end{theorem}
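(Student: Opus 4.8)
The plan is to reduce the statement to the characterization of strong subideals recorded in Remark~\ref{D_Jcells}: the pair \((\mathscr{I},M)\), with \(\mathscr{I}=\mathscr{I}(\perm(\tab^{\lambda/\mu}))\), is a strong \(W_n\)-graph subideal of \((D_M,M)\) exactly when \(\mathscr{I}\subseteq D_M\), the set \(\mathscr{I}\) is a union of cells of \((D_M,M)\), and the indexing family of cells is upward closed in the Kazhdan--Lusztig order. So I would carry out these three checks in turn.

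The containment and the union-of-cells property I expect to be routine. Since every standard tableau is column standard, Lemma~\ref{stdideal} identifies \(\mathscr{I}\) with \(\{\,\perm(t)\mid t\in\STD(\lambda/\mu)\,\}\); as the column-standard \((\lambda/\mu)\)-tableaux are precisely \(\{\,d\tab_{\lambda/\mu}\mid d\in D_M\,\}\), this gives \(\mathscr{I}\subseteq D_M\). To see that \(\mathscr{I}w_M\) is a union of left cells it suffices, by Theorem~\ref{generalisedInd}, to know that \((\mathscr{I},M)\) is a \(W_n\)-graph ideal. I would establish this by the natural skew analogue of the determining-element argument behind Theorem~\ref{wdetelmA}: the removal operations \(\lessthan{t}{k}\) and \(\morethan{t}{k}\) build \(\STD(\lambda/\mu)\) recursively, and the algorithm of \cite{howvan:wgraphDetSets} applied to \((\mathscr{I},M)\) returns a \(W_n\)-graph by the same bookkeeping as in the straight case. (Alternatively one can realise \((\mathscr{I},M)\) as a restriction of the straight \(W\)-graph ideal of Theorem~\ref{wdetelmA} via Theorem~\ref{restrictedWGideal}, using a suitable distinguished coset representative to strip off the boxes of \([\mu]\).) Either way Theorem~\ref{generalisedInd} then shows \(\mathscr{I}\) is a union of cells of \((D_M,M)\).

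The substantive point, and the step I expect to be the main obstacle, is upward closure, for which I would identify the cells explicitly. Via \(w\mapsto ww_M\) the cells of \((D_M,M)\) correspond to the left cells of \(W_n\) contained in \(D_Mw_M\); since \(W_M\) is the column group of \(\tab_{\lambda/\mu}\), these are the left cells whose Robinson--Schensted shape \(\nu\) satisfies \(\nu^{*}\trianglerighteq\beta\), where \(\beta\) is the partition obtained by sorting the column lengths of \([\lambda/\mu]\), and the higher a cell sits in the Kazhdan--Lusztig order the more dominant its shape. On the other hand, the cells contained in \(\mathscr{I}\) are those whose shape appears in the skew Specht module \(S^{\lambda/\mu}\), i.e. the \(\nu\) with \(c^{\lambda}_{\mu\nu}>0\). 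Thus upward closure reduces to the assertion that, among the admissible shapes \(\nu\trianglelefteq\beta^{*}\), the support \(\{\,\nu\mid c^{\lambda}_{\mu\nu}>0\,\}\) is closed upwards in dominance. Note this genuinely needs the admissibility constraint: the support is typically \emph{not} an up-set in the full dominance order, but the column group truncates it exactly at the top shape \(\tau=\beta^{*}\).

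I would prove this closure directly rather than by invoking convexity of Littlewood--Richardson supports. One checks first that \(\tau=\beta^{*}\) is itself in the support, and then that whenever \(c^{\lambda}_{\mu\nu}>0\) and \(\nu\vartriangleleft\nu'\trianglelefteq\tau\) one also has \(c^{\lambda}_{\mu\nu'}>0\), by climbing one dominance-covering step at a time. Concretely, from a lattice (Yamanouchi) filling of \([\lambda/\mu]\) witnessing content \(\nu\) I would produce one of content \(\nu'\) by a single local relabelling, the inequality \(\nu'\trianglelefteq\tau\) ensuring there is room in the columns to do so. The delicate part, where the real work lies, is verifying that this relabelling preserves both column-strictness and the lattice condition, together with the multiplicity bookkeeping when a shape \(\nu\) indexes several cells of \((D_M,M)\). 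Once the support is shown to be an up-set among the admissible shapes, Remark~\ref{D_Jcells} yields that \((\mathscr{I},M)\) is a strong \(W_n\)-graph subideal of \((D_M,M)\).
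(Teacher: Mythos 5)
Your proposal takes a genuinely different route from the paper, and the route you choose leaves its central step unproved. The paper's proof is exactly the alternative you relegate to a parenthesis: embed \(W_n\) as the parabolic \(W_K\leqslant W_{m+n}\) with \(K=\{\,s_i\mid m+1\leqslant i<m+n\,\}\), take the standard \(\lambda\)-tableau \(t_0\) with \(\lessthan{t_0}{(m+1)}=\tab_\mu\) and \(\morethan{t_0}m=m+\tab_{\lambda/\mu}\), put \(d=\perm(t_0)\in D_{K,J}\) with \(J=J_\lambda\), and check that \(\{\,w\in W_K\mid wd\in\mathscr{I}(\perm(\tab^{\lambda}))\,\}\) is (the image of) \(\mathscr{I}(\perm(\tab^{\lambda/\mu}))\) and \(K\cap dJd^{-1}\) is (the image of) \(M\). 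The point you miss is that this restriction yields not merely the \(W_K\)-graph ideal property via Theorem~\ref{restrictedWGideal}, but the full strong-subideal conclusion, because \cite[Theorem~8.7]{nguyen:wgbiideals} states that the restriction of a strong \(W\!\)-graph subideal is again a strong \(W_K\)-graph subideal; the input is Theorem~\ref{wdetelmA} together with Remark~\ref{strongspecht}. With that one citation the whole upward-closure analysis you propose becomes unnecessary.

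As written, your main argument has a genuine gap where you yourself locate ``the real work.'' First, the identification of the cells of \((D_M,M)\) contained in \(\mathscr{I}\) with the shapes \(\nu\) satisfying \(c^{\lambda}_{\mu\nu}>0\) is asserted, not proved, and it does not by itself control upward closure at the level of cells: a single shape \(\nu\) generally indexes several cells of \((D_M,M)\) (their number is a Kostka-type multiplicity for the column group, typically strictly larger than \(c^{\lambda}_{\mu\nu}\)), so \(\mathscr{I}\) may contain only some of the cells of a given shape and upward closure must be verified cell by cell rather than shape by shape; you flag this ``multiplicity bookkeeping'' but do not resolve it. Second, the combinatorial core --- that the Littlewood--Richardson support is an up-set among the admissible shapes, to be proved by a one-step relabelling of lattice fillings preserving column-strictness and the lattice condition --- is exactly the step you defer, so nothing is actually established. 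The preliminary checks are fine (\(\mathscr{I}\subseteq D_M\) via Lemma~\ref{stdideal}, and the restriction realisation does make \((\mathscr{I},M)\) a \(W_n\)-graph ideal), but ``the algorithm returns a \(W\!\)-graph by the same bookkeeping as in the straight case'' is not an argument either. I would recommend abandoning the cell-theoretic computation and carrying out the restriction argument in full.
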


\begin{proof}
Recall that \(\lambda\vdash(m+n)\) and \(\mu\vdash m\) for some nonnegative
integer~\(m\). Clearly there is an isomorphism
\(\varphi\colon W_n\to W_{[m+1,m+n]}\) given by \(s_i\mapsto s_{m+i}\) for all
\(i\in[1,n-1]\), and it suffices to prove that
\((\varphi(\mathscr{I}(\perm(\tab^{\lambda/\mu}))),\varphi(M))\) is a strong
\(W_{[m+1,m+n]}\)-graph subideal of \((\varphi(D_M),\varphi(M))\).
Note that \(\mathscr{I}(\perm(\tab^{\smash{\lambda/\mu}}))
=\{\,w\in W_n\mid w\tab_{\lambda/\mu}\in\STD(\lambda/\mu)\,\}\),
by Lemma~\ref{stdideal}, and so
\(\varphi(\mathscr{I}(\perm(\tab^{\smash{\lambda/\mu}})))
=\{\,w\in W_{[m+1,m+n]}\mid w(m+\tab_{\lambda/\mu})\in\STD_m(\lambda/\mu)\,\}\).

Write \(W=W_{m+n}\), and note that \(W_{[m+1,m+n]}\) can be identified with the
standard parabolic subgroup \(W_K\) of~\(W\), where \(K = \{\,s_i\mid m+1\leqslant i< m+n\,\}\).
Let \(t_0\in \STD(\lambda)\) be defined by \(\lessthan{t_0}{(m+1)}=\tab_\mu\)
and \(\morethan{t_0}m=m+\tab_{\lambda/\mu}\), and let \(d=\perm(t_0)\). Thus
\(t_0=d\tab_\lambda\) and \(d\in D_J\), where \(J=J_\lambda\). Since
\((d^{-1}(m+1),d^{-1}(m+2),\ldots,d^{-1}(m+n))\) is an increasing sequence,
consecutive terms differing either by~1 or by \(1+\mu_j^*\) for some~\(j\), it
follows that \(sd>d\) for all~\(s\in K\). Thus \(d\in D_K^{-1}\), and so
\(d\in D_{K,J}=D_K^{-1}\cap D_J\). By \cite[Lemma~2.5]{nguyen:wgbiideals},
this implies that \(\{\,w\in W_K\mid wd\in D_J\,\}=D^K_{K\cap dJd^{-1}}\),
the set of minimal length representatives of the left cosets of
\(W_{K\cap dJd^{-1}}\) in~\(W_K\).

Write \(\mathscr{I}=\mathscr{I}(\perm(\tab^{\smash{\lambda}}))\), so that
\((\mathscr{I},J)\) is a \(W\!\)-graph ideal by Theorem~\ref{wdetelmA}, and a
strong \(W\!\)-graph subideal of \((D_J,J)\) by Remark~\ref{strongspecht}.
Furthermore, \(x\mapsto x\tab_\lambda\) gives a bijection from
\(\mathscr{I}\)~to~\(\STD(\lambda)\). Since 
\begin{align*}
\{\,w\in W_K\mid wd\in\mathscr{I}\,\}&=
\{\,w\in W_K\mid wd\tab_{\lambda}\in\STD(\lambda)\,\}\\
&=\{\,w\in W_K\mid wt_0\in\STD(\lambda)\,\}\\
&=\{\,w\in W_K\mid \morethan{(wt_0)}m\in\STD_m(\lambda/\mu)\,\}\\
&=\{\,w\in W_K\mid w(m+\tab_{\lambda/\mu})\in\STD_m(\lambda/\mu)\,\}\\[-1\jot]
&=\varphi(\mathscr{I}(\perm(\tab^{\lambda/\mu}))),
\end{align*}
it follows from Theorem~\ref{restrictedWGideal} that
\((\mathscr{I}(\varphi(\perm(\tab^{\lambda/\mu}))),K\cap dJd^{-1})\)
is a \(W_K\)-graph ideal. Moreover, by \cite[Theorem~8.7]{nguyen:wgbiideals},
it is a strong \(W_K\)-graph subideal of \((D^K_{K\cap dJd^{-1}},K\cap dJd^{-1})\).
To complete the proof, it remains to show that
\(K\cap dJd^{-1}=\varphi(M)\). But this is clear from the fact
that \(\morethan{d\tab_{\lambda}}m=m+\tab_{\lambda/\mu}\), which shows
that if \(m+1\leqslant i<m+n\) then \(i\) and \(i+1\) are
in the same column of \(d\tab_\lambda\) if and only if \(i-m\) and \(i-m+1\)
are in the same column of \(\tab_{\lambda/\mu}\).
\qed
\end{proof}

\section{Converse of Theorem \ref{finalThr}}
\label{sec:9}
Continuing with the notation used in the previous section, for each
\(J\subseteq\{s_1,s_2,\ldots,s_{n-1}\}\) define \(\Skew(J)\) to be the set
of all \(\lambda/\mu \vdash n\) such that \(J_{\lambda/\mu}=J\) and the 
skew diagram \([\lambda/\mu]\) is basic. For each \(w\in D_J\) define
\(\Skew(J,w)=\{\,\lambda/\mu\in\Skew(J)\mid w\tab_{\lambda/\mu}\in\STD(\lambda/\mu)\,\}\),
and \(\STD(J,w)=\{\,w\tab_{\lambda/\mu}\mid \lambda/\mu\in\Skew(J,w)\,\}\).
Observe that if \(t\in\STD(J,w)\) then all other elements of \(\STD(J,w)\) can be
obtained from~\(t\) by sliding columns vertically up or down. Clearly
\(\STD(J,w)\ne\emptyset\), since \(\Skew(J,w)\) always contains the (unique)
\(\lambda/\mu\in\Skew(J)\) such that \([\lambda/\mu]\) has \(n\)
rows, all of length~1. We define \(\tau(J,w)\) to
be the element of \(\STD(J,w)\) with the least possible number of rows. It is clear
that this element is unique.

In this section we shall show that if \(w\in W_n\) has the property that
\((\mathscr{I}(w),J)\) is a \(W_n\)-graph ideal then \(\tau(J,w)=\tab^{\lambda/\mu}\)
for some \(\lambda/\mu\vdash n\). 

For the purposes of this section it is convenient to make the following definition.

\begin{definition} Let \((W,S)\) be a Coxeter system, let \(K\subseteq S\), and let
\(W_K\) be the subgroup of~\(W\) generated by~\(K\). If \(w\in W\) then
the \textit{left \(W_K\)-component of \(w\)} is the element \(x\in W_K\) such
that \(x^{-1}w\) is the minimal length element of~\(W_Kw\), and \(d=x^{-1}w\)
is the \(D_K^{-1}\)-\textit{component of~\(w\)}.
\end{definition}
Let \(W=W_n\) and let \(K=S_n\setminus\{s_{n-1}\}\) and \(L=S_n\setminus\{s_1\}\), so
that the parabolic subgroups \(W_K\) and \(W_L\) of~\(W\) can be identified with
\(W_{n-1}\) and \(W_{[2,n]}\) respectively. Let \(\lambda/\mu\vdash n\) and
\(t\in\STD(\lambda/\mu)\). Since \(t\) is standard, the number~\(n\) must be at the
bottom of its column and the right hand end of its row in~\(t\), and the
number~\(1\) must be at the top of its column and the left hand end of its
row. Define \(\kappa\) and \(\nu\) by \([\kappa]=[\lambda]\setminus\{t^{-1}(n)\}\)
and \([\nu]=\mu\cup\{t^{-1}(1)\}\). The restriction of \(t\) to \([\kappa/\mu]\) is
\(\lessthan tn\), the \((\kappa/\mu)\)-tableau obtained by deleting from~\(t\)
the box containing~\(n\), and the restriction of \(t\) to \([\lambda/\nu]\)
is \(\morethan t1\), the \((\lambda/\nu)\)-tableau obtained by deleting
from~\(t\) the box containing~\(1\). Clearly \(\lessthan tn\in\STD(\kappa/\mu)\)
and \(\morethan t1\in\STD_1(\lambda/\nu)\). 

\begin{lemma}\label{restriction}
Let \(W=W_n\) and let \(K=S_n\setminus\{s_{n-1}\}\) and \(L=S_n\setminus\{s_1\}\).
Let \(\lambda/\mu\) be a skew partition of~\(n\) and \(t\in\STD(\lambda/\mu)\),
and let \(w=\perm(t)\).\setitemindent{(iii)}
\begin{itemize}[topsep=1 pt]
\item[\textup{(i)}]The left \(W_K\)-component of~\(w\) is \(\perm(\lessthan tn)\),
and the left \(W_L\)-component of~\(w\) is \(\perm(\morethan t1)\).
\item[\textup{(ii)}]Let \([\lambda]\setminus\{t^{-1}(n)\}=[\kappa]\)
and \(\mu\cup\{t^{-1}(1)\}=[\nu]\),
and let \(t',\,t''\in\STD(\lambda/\mu)\) be defined by
\(\lessthan{t'}n=\tab_{\smash{\kappa/\mu}}\) and \(t'(n)=t(n)\), and 
\(\morethan{t''}1=1+\tab_{\smash{\lambda/\nu}}\) and \(t''(1)=t(1)\).  Then the
\(D_K^{-1}\)-component 
of~\(w\) is \(d=\perm(t')\) and the \(D_L^{-1}\)-component of \(w\) is \(e=\perm(t'')\). 
Furthermore, \(K\cap dJ_{\lambda/\mu}d^{-1}=J_{\kappa/\mu}\) and
\(L\cap eJ_{\lambda/\mu}e^{-1}=\{\,s_i\in L\mid s_{i-1}\in J_{\lambda/\nu}\,\}\).
\end{itemize}
\end{lemma}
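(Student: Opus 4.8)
The plan is to translate everything into one-line notation, using the column reading order of $\tab_{\lambda/\mu}$ as a dictionary between tableaux and permutations. Let $c_1,\dots,c_n$ be the cells of $[\lambda/\mu]$ listed in column reading order (each column top to bottom, columns left to right), so that $\tab_{\lambda/\mu}(c_k)=k$ and hence $w=\perm(t)$ is the permutation $k\mapsto t(c_k)$. Under this dictionary $s_i\in J_{\lambda/\mu}$ precisely when $c_i$ and $c_{i+1}$ lie in the same column (so $c_{i+1}$ is directly below $c_i$), and, recalling the description of minimal-length coset representatives used in Section~\ref{sec:4}, $D_K^{-1}$ consists of those permutations in which the values $1,\dots,n-1$ occur in increasing order of position, while $D_L^{-1}$ consists of those in which $2,\dots,n$ occur in increasing order of position.

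First I would prove (i). Since $t$ is standard, $t^{-1}(n)$ is the last cell of its column in the reading order, say $t^{-1}(n)=c_r$; deleting it leaves $c_1,\dots,c_{r-1},c_{r+1},\dots,c_n$, which is exactly the column reading order of $[\kappa/\mu]$, so $\tab_{\kappa/\mu}(c_k)=k$ for $k<r$ and $\tab_{\kappa/\mu}(c_k)=k-1$ for $k>r$. Reading $\perm(\lessthan{t}{n})$ off this shows it is the element $x\in W_K$ whose one-line form is that of $w$ with the entry $n$ (in position $r$) deleted and $n$ fixed. A direct check then gives $w=xd$, where $d$ is the permutation having $n$ in position $r$ and $1,\dots,n-1$ increasing; since $x\in W_K$ and $d\in D_K^{-1}$, uniqueness of the parabolic factorization identifies $x$ as the left $W_K$-component of $w$. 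The statement for $W_L$ is entirely parallel, deleting instead the first cell $t^{-1}(1)=c_s$ of its column; here the target shift to $[2,n]$ makes $\perm(\morethan{t}{1})$ the element of $W_L$ obtained from $w$ by bringing its entry $1$ to the front, and the same uniqueness argument applies.

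For (ii) the identification of $d$ and $e$ falls straight out of the factorizations just found: reading $\perm(t')$ and $\perm(t'')$ off the column reading words shows they coincide with the minimal-length representatives produced above, giving $d=\perm(t')$ and $e=\perm(t'')$. It then remains to intersect the conjugated generating set with $K$, respectively $L$. Writing $d(k)=k$ for $k<r$, $d(r)=n$, and $d(k)=k-1$ for $k>r$, one finds that $ds_id^{-1}$ is a simple reflection lying in $K$ exactly when $s_i\in J_{\lambda/\mu}$ with $i<r-1$ (then $ds_id^{-1}=s_i$) or with $i>r$ (then $ds_id^{-1}=s_{i-1}$), whereas the generators straddling position $r$ are sent to the non-simple transpositions $(r-1,n)$ and $(n,r)$; comparing with the reading-word description of $J_{\kappa/\mu}$ yields $K\cap dJ_{\lambda/\mu}d^{-1}=J_{\kappa/\mu}$. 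The same bookkeeping for $e$, using $e(k)=k+1$ for $k<s$, $e(s)=1$, and $e(k)=k$ for $k>s$, produces $s_{i+1}$ (from $s_i\in J_{\lambda/\mu}$ with $i<s-1$) and $s_i$ (from $s_i\in J_{\lambda/\mu}$ with $i>s$); after the shift by one inherent in the target $[2,n]$ this is exactly $\{\,s_i\in L\mid s_{i-1}\in J_{\lambda/\nu}\,\}$.

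The deletion bookkeeping and the parabolic uniqueness are routine; the one place that needs genuine care is the behaviour at the removed cell. The junction indices $i=r-1,r$ (respectively $i=s-1,s$) must be checked to yield non-simple transpositions, so that the corresponding relations of $J_{\lambda/\mu}$ drop out of the intersection. Getting this boundary exactly right is what forces the clean equality $K\cap dJ_{\lambda/\mu}d^{-1}=J_{\kappa/\mu}$ on the $K$-side but the shifted description $\{\,s_i\in L\mid s_{i-1}\in J_{\lambda/\nu}\,\}$ on the $L$-side, and it is the only step where I expect real friction.
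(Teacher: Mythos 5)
Your argument is correct. For part (i) and the identification of $d$ and $e$ you follow essentially the same route as the paper: write $d$ (resp.\ $e$) in one-line form, observe that $d^{-1}(1),\dots,d^{-1}(n-1)$ is increasing so that $d\in D_K^{-1}$, note that $wd^{-1}$ fixes $n$ and hence lies in $W_K$, and appeal to uniqueness of the parabolic factorization. Where you genuinely diverge is the final clause. The paper avoids your conjugation bookkeeping entirely: it observes that $dW_{J_{\lambda/\mu}}d^{-1}$ is the column group of $t'=d\tab_{\lambda/\mu}$, so that $W_K\cap dW_{J_{\lambda/\mu}}d^{-1}$ is the column group of $\lessthan{t'}{n}=\tab_{\kappa/\mu}$, namely $W_{J_{\kappa/\mu}}$, and then invokes the standard fact $W_K\cap dW_{J_{\lambda/\mu}}d^{-1}=W_{K\cap dJ_{\lambda/\mu}d^{-1}}$ (Carter, Theorem 2.7.4, applicable because $d\in D_K^{-1}\cap D_{J_{\lambda/\mu}}$) to conclude $K\cap dJ_{\lambda/\mu}d^{-1}=J_{\kappa/\mu}$. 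Your element-by-element computation of $ds_id^{-1}$ is more elementary, and it does go through; two small points make the junction analysis cleaner than you fear. First, standardness of $t$ puts $n$ at the bottom of its column, so $s_r\notin J_{\lambda/\mu}$ and only the junction index $i=r-1$ can actually contribute (dually, $1$ is at the top of its column, so only $i=s$ matters on the $L$-side). Second, in the degenerate case $r=n$ the transposition $(r-1,n)$ is the simple reflection $s_{n-1}$, which is nevertheless excluded because it does not lie in $K$ (dually $(1,s+1)=s_1\notin L$ when $s=1$), so your phrase ``non-simple'' should really be ``not in $K$ (resp.\ $L$)''. The paper's column-group argument buys freedom from all of this case analysis at the price of citing the intersection-of-parabolics lemma; your version makes the shift by one on the $L$-side completely explicit.
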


\begin{proof}
Let \(d=\perm(t')\) and let \(t^{-1}(n)=(i,j)\in [\lambda/\mu]\).
Put \(q=\tab_{\lambda/\mu}(i,j)\), and note that \(w^{-1}(n)=d^{-1}(n)=q\) and
\((d^{-1}(1),d^{-1}(2),\ldots,d^{-1}(n-1))\) is the sequence obtained by
deleting \(q\) from the sequence \((1,2,\ldots,n)\). Thus \(wd^{-1}\in W_K\)
and \(l(sd)>l(d)\) for all \(s\in K\), which shows that \(d\) is the
\(D_K^{-1}\)-component of~\(w\).

Since the \(D_K^{-1}\)-component of \(w\) is \(d\), the left \(W_K\)-component of
\(w\) is \(wd^{-1}\). Since \(wd^{-1}(t')=w\tab_{\lambda/\mu}=t\) it
follows that \(wd^{-1}(\tab_{\kappa/\mu})=wd^{-1}(\lessthan{t'}n)\) is the
tableau obtained from \(t\) by deleting the box containing \(wd^{-1}(n)=n\). Hence
\(wd^{-1}=\perm(\lessthan tn)\).

The proof that \(e=\perm(t'')\) is the \(D_L^{-1}\)-component of~\(w\) and the proof
that \(\perm(\morethan t1)\) is the left \(W_L\)-component of~\(w\) are similar to
the proofs that \(d\) is the \(D_K^{-1}\)-component and \(\perm(\lessthan tn)\) the left
\(W_K\)-component. So it remains to prove the last sentence of~(ii). 

Let \(J=J_{\lambda/\mu}\), so that \(W_J=W_{\lambda/\mu}\) is the column
group of the tableau \(\tab_{\lambda/\mu}\). Then \(eW_Je^{-1}\) is the column
group of \(e\tab_{\lambda/\mu}=t''\), and \(W_L\cap eW_Je^{-1}\) is
the column group of \(\morethan {t''}1=1+\tab_{\lambda/\nu}\). That is,
\(W_L\cap eW_Je^{-1}=W_M\), where \(M=\{\,s_i\in L\mid s_{i-1}\in J_{\lambda/\nu}\,\}\).
But \(W_L\cap eW_Je^{-1}=W_{L\cap eJe^{-1}}\),
by \cite[Theorem 2.7.4]{Carter:book}, since \(e\in D_L^{-1}\cap D_J\),
and hence \(L\cap eJe^{-1}=M\), as required.

The proof that \(K\cap dJd^{-1}=J_{\kappa/\mu}\) is similar.
\qed
\end{proof}

\begin{definition}\label{moving}
Given a tableau \(t\in\STD(\lambda/\mu)\), for each \(j\in[1,\lambda_1]\) let
\(M(t,j)\) be set of all integers~\(k\) such that
\(k\leqslant\min(\lambda_j^*-\lambda_{j+1}^*,\mu_j^*-\mu_{j+1}^*)\) and
\(t(i+k,j)<t(i,j+1)\) for all \(i\) such that \(\mu_j^*-k<i\leqslant\lambda_{j+1}^*\).
Define \(\moveup(t,j)=\max(M(t,j))\).
\end{definition}

\begin{remark}\label{movingremark}
Intuitively, \(\moveup(t,j)\)
is the maximal amount by which the \(j\)-th column of \(t\)
can be slid up while keeping the tableau standard. Observe that when
\(k\in  M(t,j)\), or, more generally, when
\(k\leqslant\min(\lambda_j^*-\lambda_{j+1}^*,\mu_j^*-\mu_{j+1}^*)\), the condition
\(\mu_j^*-k<i\leqslant\lambda_{j+1}^*\) is satisfied if and only if
\((i,j+1)\) and \((i+k,j)\) are both in \([\lambda/\mu]\). It is clear that 
\(0\in M(t,j)\) in all cases. Indeed, it is clear that \(M(t,j)=[0,\moveup(t,j)]\).
Note also that if \(\mu_j^*\geqslant\lambda_{j+1}^*\) then
\(\mu_j^*-\lambda_{j+1}^*\in M(t,j)\).
\end{remark}
Let \(\lambda/\mu\vdash n\) and \(t\in\STD(\lambda/\mu)\), and for each \(j\in[1,\lambda_1]\)
define \(\delta_j(t)=\sum_{l=j}^{\lambda_1}\moveup(t,l)\). Then
\begin{align*}
(\lambda_j^*-\delta_j(t))-(\lambda_{j+1}^*-\delta_{j+1}(t))
&=(\lambda_j^*-\lambda_{j+1}^*)-\moveup(t,j)\geqslant0\\
\noalign{\vskip-2 pt\hbox{and}\vskip-2 pt}
(\mu_j^*-\delta_j(t))-(\mu_{j+1}^*-\delta_{j+1}(t))
&=(\mu_j^*-\mu_{j+1}^*)-\moveup(t,j)\geqslant0,
\end{align*}
and it follows that
\(\{\,(i-\delta_j(t),j)\mid (i,j)\in[\lambda/\mu]\,\}\) is the diagram of a
skew partition \(\zeta/\eta\vdash n\) such that \(\zeta_j^*=\lambda_j^*-\delta_j(t)\)
and \(\eta_j^*=\mu_j^*-\delta_j(t)\) for all \(j\in[1,\lambda_1]\). Furthermore,
if \(t'\in\Tab_{\zeta/\eta}\) is defined by
\[
t'(i,j)=t(i+\delta_j(t),j)\qquad\text{for all \((i,j)\in[\zeta/\eta]\)}
\]
then \(t'\in\STD(\zeta/\eta)\), since if \(i\) and \(j\) are such that
\((i,j)\) and \((i,j+1)\) are both in \([\zeta/\eta]\) then
\[
t'(i,j)=t(h+\moveup(t,j),j)<t(h,j+1)=t'(i,j+1)
\]
where \(h=i+\delta_{j+1}(t)\). It is also clear that \(\delta_j(t')=0\) for all~\(j\).

\begin{definition}\label{squash}
Let \(\lambda/\mu\vdash n\) and \(t\in\STD(\lambda/\mu)\). We write \(\squash(t)\)
for the tableau \(t'\) defined in the above preamble, and we say that
\(t\) is \textit{squashed\/} if \(\squash(t)=t\).
\end{definition}

\begin{remark}\label{squashuniqueness}
Let \(\lambda/\mu\vdash n\) and \(\zeta/\eta\vdash n\), and suppose
that \(\lambda_j^*-\mu_j^*=\zeta_j^*-\eta_j^*\) for all~\(j\). Note that this
condition implies that \(J_{\lambda/\mu}=J_{\zeta/\eta}\). It is easily shown that
if \(t\in\STD(\lambda/\mu)\) and \(u\in\STD(\zeta/\eta)\) then \(\squash(t)=\squash(u)\)
if and only if \(\perm(t)=\perm(u)\), or, equivalently, the tableau \(u\) can be obtained
from the tableau \(t\) by sliding columns vertically. Hence, in the terminology
introduced at the start of this section, if \(J\subseteq\{s_1,s_2,\ldots,s_n\}\)
and \(w\in D_J\) then  \(\tab(J,w)=\squash(w\tab_{\lambda/\mu})\)
for all  \(\lambda/\mu\in\Skew(J,w)\).
\end{remark}

\begin{lemma}\label{squashtop}
Let \(\lambda/\mu\vdash n\). Then \(\squash(\tab^{\lambda/\mu})=\tab^{\zeta/\eta}\),
where \([\zeta/\eta]\) is obtained from \([\lambda/\mu]\) by removing all empty rows.
\end{lemma}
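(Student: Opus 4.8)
The plan is to compute the individual column shifts $\moveup(\tab^{\lambda/\mu},j)$ explicitly, sum them to obtain the total shift $\delta_j(\tab^{\lambda/\mu})$ of each column, and show that this shift equals the number of empty rows of $[\lambda/\mu]$ lying above column~$j$. This identifies the geometric effect of $\squash$ on $\tab^{\lambda/\mu}$ with the operation of deleting all empty rows, after which it remains only to check that deleting the empty rows of $\tab^{\lambda/\mu}$ produces $\tab^{\zeta/\eta}$.

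The key computation is the formula
\[
\moveup(\tab^{\lambda/\mu},j)=\max(0,\mu_j^*-\lambda_{j+1}^*)\qquad\text{for all }j\in[1,\lambda_1].
\]
The inequality ``$\geqslant$'' holds for every standard tableau, since $0\in M(t,j)$ always and, as noted in Remark~\ref{movingremark}, $\mu_j^*-\lambda_{j+1}^*\in M(t,j)$ whenever $\mu_j^*\geqslant\lambda_{j+1}^*$. For the reverse inequality I would use the explicit entries of the top tableau. Writing $c_i=\sum_{h=1}^{i}(\lambda_h-\mu_h)$, a direct calculation from~\eqref{toptab} gives
\[
\tab^{\lambda/\mu}(i+1,j)-\tab^{\lambda/\mu}(i,j+1)=\lambda_i-\mu_{i+1}-1,
\]
which is nonnegative exactly when $\lambda_i>\mu_{i+1}$, and in particular whenever row~$i$ is nonempty (for then $\lambda_i>\mu_i\geqslant\mu_{i+1}$). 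Since, by Remark~\ref{movingremark}, the condition defining $M(t,j)$ is only tested at indices~$i$ for which $(i,j+1)\in[\lambda/\mu]$, every tested row~$i$ is nonempty. When $\mu_j^*<\lambda_{j+1}^*$, either the size constraint already forbids sliding by~$1$, or $(\mu_j^*,j+1)$ is a box and the displayed identity at $i=\mu_j^*$ produces a violation. When $\mu_j^*\geqslant\lambda_{j+1}^*$, one instead compares the top box $(\mu_j^*+1,j)$ of column~$j$ with the bottom box $(\lambda_{j+1}^*,j+1)$ of column~$j+1$; the latter lies in a strictly higher row (as $\lambda_{j+1}^*<\mu_j^*+1$) and so carries a strictly smaller entry, again violating row-standardness after the extra slide. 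This pins down the formula.

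Summing, $\delta_j(\tab^{\lambda/\mu})=\sum_{l=j}^{\lambda_1}\max(0,\mu_l^*-\lambda_{l+1}^*)$ telescopes to the number of empty rows~$i$ with $\mu_i\geqslant j$, equivalently the number of empty rows among the first $\mu_j^*$ rows; call this $\epsilon_j$. The decisive observation is that for any box $(i,j)\in[\lambda/\mu]$ the intervening rows $\mu_j^*<h<i$ all contain a box in column~$j$ and hence are nonempty, so the number of empty rows strictly above row~$i$ equals $\epsilon_j=\delta_j$ for every such~$i$. Therefore the uniform upward shift of column~$j$ by $\delta_j$ carries each box $(i,j)$ to exactly the position it occupies after all empty rows have been deleted, so $\squash$ and ``delete all empty rows'' have the same effect on $\tab^{\lambda/\mu}$.

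Finally I would verify that deleting the empty rows of $\tab^{\lambda/\mu}$ yields $\tab^{\zeta/\eta}$. If $i_1<i_2<\cdots$ are the nonempty rows, then row $i_p$ becomes row~$p$ of $[\zeta/\eta]$, with $\eta_p=\mu_{i_p}$ and $\zeta_p=\lambda_{i_p}$, and it carries the consecutive integers $c_{i_{p-1}}+1,\dots,c_{i_p}$. Since empty rows contribute nothing to $c$, one has $\sum_{q<p}(\zeta_q-\eta_q)=c_{i_p-1}$, and substituting into~\eqref{toptab} gives $\tab^{\zeta/\eta}(p,j)=\tab^{\lambda/\mu}(i_p,j)$, as required. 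Combined with the previous paragraph this gives $\squash(\tab^{\lambda/\mu})=\tab^{\zeta/\eta}$. I expect the main obstacle to be the upper bound for $\moveup(\tab^{\lambda/\mu},j)$: one must rule out every admissible slide beyond $\max(0,\mu_j^*-\lambda_{j+1}^*)$, and the split according to whether $\mu_j^*\geqslant\lambda_{j+1}^*$ is where the argument needs the most care.
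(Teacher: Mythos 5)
Your proposal is correct and takes essentially the same route as the paper: the paper's entire proof is the observation that every entry in a row of \(\tab^{\lambda/\mu}\) exceeds every entry in all earlier rows, whence \(\moveup(\tab^{\lambda/\mu},j)=\max(0,\mu_j^*-\lambda_{j+1}^*)\), ``from which the result follows.'' You have simply supplied in full the verification of that formula and the bookkeeping (summing the shifts, matching them to the count of empty rows, and identifying the result with \(\tab^{\zeta/\eta}\)) that the paper leaves to the reader.
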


\begin{proof}
Let \(j\in[1,\lambda_1]\). Since all entries in any row of \(\tab^{\lambda/\mu}\) exceed all
entries in all earlier rows, it can be seen that
\(\moveup(\tab^{\lambda/\mu},j)=\max(0,\mu_j^*-\lambda_{j+1}^*)\), from which the
result follows.
\qed
\end{proof}

\begin{corollary}\label{squashtop-cor}
Let \(\lambda/\mu\vdash n\), and let \(r=\squash(\lessthan{\tab^{\lambda/\mu}}n)\)
and \(u=\squash(\morethan{\tab^{\lambda/\mu}}1)\). Let \(q=\lambda_1^*\) and
suppose that all rows of \([\lambda/\mu]\) are nonempty.
\setitemindent{ (iii)}
\begin{itemize}[topsep=1 pt]
\item[\textup{(i)}] If \(\mu_1<\lambda_1-1\) then
\(u=1+\tab^{\lambda/\nu}\),
where \(\nu_1=\mu_1+1\) and \(\nu_i=\mu_i\) for all~\(i>1\).
\item[\textup{(ii)}] If \(\mu_1=\lambda_1-1\) then
\(u=1+\tab^{\zeta/\nu}\),
where \(\nu_i=\mu_{i+1}\) and \(\zeta_i=\lambda_{i+1}\) for all~\(i\geqslant1\).
\item[\textup{(iii)}] If \(\mu_q<\lambda_q-1\) then
\(r=\tab^{\kappa/\mu}\), where 
\(\kappa_q=\lambda_q-1\), and \(\kappa_i=\lambda_i\) for all~\(i\ne q\).
\item[\textup{(iv)}] If \(\mu_q=\lambda_q-1\) then
\(r=\tab^{\kappa/\xi}\), where \(\kappa_i=\lambda_i\) and \(\xi_i=\mu_i\)
for all~\(j<q\), and \(\kappa_i=\xi_i=0\) for all \(i\geqslant q\).\qed
\end{itemize}
\end{corollary}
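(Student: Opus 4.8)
The plan is to reduce all four cases to Lemma~\ref{squashtop}, by first identifying the restricted tableaux $\lessthan{\tab^{\lambda/\mu}}n$ and $\morethan{\tab^{\lambda/\mu}}1$ explicitly as row-filling tableaux of appropriate smaller skew shapes (up to a uniform shift of entries in the second case), and then invoking Lemma~\ref{squashtop} to delete empty rows. I would begin by locating the extreme entries. Since $\tab^{\lambda/\mu}$ fills the rows consecutively from left to right and top to bottom, and all rows are nonempty by hypothesis, the entry $1$ occupies the first box $(1,\mu_1+1)$ of the top row and the entry $n$ occupies the last box $(q,\lambda_q)$ of the bottom row, where $q=\lambda_1^*$.

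For parts (iii) and (iv) I would observe that deleting the box carrying the maximal entry from a row-filling again leaves a row-filling, so that $\lessthan{\tab^{\lambda/\mu}}n=\tab^{\kappa/\mu}$ with $\kappa_q=\lambda_q-1$ and $\kappa_i=\lambda_i$ for $i\ne q$; this is immediate from the defining formula~(\ref{toptab}), no shift of entries being needed since the smallest entry is still $1$. If $\mu_q<\lambda_q-1$ then $\kappa/\mu$ has no empty rows, so Lemma~\ref{squashtop} gives $r=\tab^{\kappa/\mu}$, which is~(iii). If instead $\mu_q=\lambda_q-1$ then row $q$ of $\kappa/\mu$ is empty and is the last row, so Lemma~\ref{squashtop} deletes it and returns the shape $\kappa/\xi$ of~(iv).

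Parts (i) and (ii) proceed along the same lines, the only extra ingredient being the shift of entries. Deleting the box $(1,\mu_1+1)$ from $\tab^{\lambda/\mu}$ leaves a tableau whose entries fill the rows consecutively, now starting from $2$; checking this against~(\ref{toptab}) yields $\morethan{\tab^{\lambda/\mu}}1=1+\tab^{\lambda/\nu'}$, where $\nu'_1=\mu_1+1$ and $\nu'_i=\mu_i$ for $i>1$. Since $\squash$ depends only on the column positions and the relative order of the entries, it commutes with adding the constant $1$, so $u=1+\squash(\tab^{\lambda/\nu'})$, and Lemma~\ref{squashtop} finishes the job: when $\mu_1<\lambda_1-1$ the top row of $\lambda/\nu'$ survives, giving~(i); when $\mu_1=\lambda_1-1$ that row becomes empty and is removed, giving the shifted shape $\zeta/\nu$ of~(ii). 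The only genuine computation is the identity $\morethan{\tab^{\lambda/\mu}}1=1+\tab^{\lambda/\nu'}$, where one must verify that the offset $\lambda_1-\nu'_1$ absorbed into the row-sum $\sum_{h<i}(\lambda_h-\nu'_h)$ of~(\ref{toptab}) exactly cancels the added constant $1$ in every lower row; this is the main (if mild) obstacle, and everything else is bookkeeping plus a direct appeal to Lemma~\ref{squashtop}.
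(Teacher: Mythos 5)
Your argument is correct and follows exactly the route the paper intends: the corollary is stated with no written proof because it is regarded as immediate from Lemma~\ref{squashtop}, and your proposal simply makes that reduction explicit by identifying $\lessthan{\tab^{\lambda/\mu}}n$ and $\morethan{\tab^{\lambda/\mu}}1$ as the maximal tableaux $\tab^{\kappa/\mu}$ and $1+\tab^{\lambda/\nu'}$ via formula~\eqref{toptab} and then letting the lemma remove the (at most one) empty row. The one computation you flag, the cancellation of the shift by $1$ against the change in $\sum_{h<i}(\lambda_h-\nu'_h)$, checks out, as does the observation that $\squash$ commutes with translating all entries.
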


\begin{definition}\label{WGtableau}
Let \(W=W_n\), where \(n\) is a positive integer. A \textit{\(W\!\)-graph determining
tableau\/} is a tableau 
\(t\in\STD(\lambda/\mu)\), where \(\lambda/\mu\vdash n\), such that
\((\mathscr{I}(\perm(t)),J_{\lambda/\mu})\) is a \(W\!\)-graph ideal.
\end{definition}

\begin{remark}\label{squashWGT}
If \(t\in\STD(\lambda/\mu)\) is a \(W\!\)-graph determining tableau and
\(\squash(t)=t'\in\STD(\zeta/\eta)\), then \(t'\) is also a \(W\)-graph determining
tableau, since \(\perm(t')=\perm(t)\) and \(J_{\zeta/\eta}=J_{\lambda/\mu}\).
Moreover, if \(w\in W\) and \((\mathscr{I}(w),J)\) is a \(W\!\)-graph ideal,
then \(\tab(J,w)\) is a \(W\!\)-graph determining tableau. So classifying
\(W\!\)-graph determining tableaux is essentially the same as classifying
\(W\!\)-graph determining elements in type~\(A\).
\end{remark}
It follows from Remark \ref{D_Jcells} that if \(t\in\STD(\lambda/\mu)\)
is a \(W\!\)-graph determining tableau and \(J=J_{\lambda/\mu}\) then
\(\mathscr{I}(\perm(t))\) is a union of cells of \((D_J,J)\).
This observation motivates the following definition.

\begin{definition}\label{cellidealtableaux}Let \(W=W_n\), where \(n\) is
a positive integer, and let \(t\in\STD(\lambda/\mu)\), where
\(\lambda/\mu\vdash n\). We say that \(t\) is a \textit{cell ideal
generating tableau\/} for \(W\) if \(\mathscr{I}(\perm(t))\)
is a union of cells of \((D_J,J)\), where \(J=J_{\lambda/\mu}\).
\end{definition}

\begin{remark}\label{squashCIGT}Observe that if \(t\in\STD(\lambda/\mu)\) and
\(u\in\STD(\zeta/\eta)\) are such that \(\squash(t)=\squash(u)\), then \(t\) is
a cell ideal generating tableau if and only if \(u\) is a cell ideal generating
tableau, since \(\perm(t)=\perm(u)\) and \(J_{\lambda/\mu}=J_{\zeta/\eta}\).
\end{remark}

\begin{remark}\label{summary}
By Theorem~\ref{finalThr} and Lemma~\ref{squashtop}, every squashed maximal
tableau is a squashed \(W\!\)-graph determining tableau, and hence also
a squashed cell ideal generating tableau. Our main theorem below will
show that every squashed cell ideal generating tableau is maximal, so that
these three classes of tableaux concide.

Recall that if \(J\subseteq S_n\) and \(w\in D_J\) then \(t=\tab(J,w)\) is the
unique squashed standard tableau such that \(w=\perm(t)\) and \(J=J_{\lambda/\mu}\),
where \(\lambda/\mu\) is the shape of~\(t\). Thus classifying the squashed cell
ideal generating tableaux is the same as classifying the pairs \((w,J)\) such
that \(\mathscr{I}(w)\) is a union of cells of \((D_J,J)\), which in turn is the
same as classifying the pairs \((w,J)\) such that 
\(\{\,x\in W\mid w_J\leqslant\lside x\leqslant\lside ww_J\,\}\) is a nonempty
union of Kazhdan--Lusztig left cells.
\end{remark}

\begin{lemma}\label{restrictedcigt}
Let \(n>1\) be a positive integer and suppose that \(t\) is a cell ideal generating
tableau for \(W_n\). Then \(\lessthan tn\) and \(-1+(\morethan t1)\) are both
cell ideal generating tableaux for~\(W_{n-1}\).
\end{lemma}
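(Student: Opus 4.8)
The plan is to exploit the parabolic factorizations of $w=\perm(t)$ relative to the two maximal parabolic subgroups $W_K$ and $W_L$, with $K=S_n\setminus\{s_{n-1}\}$ and $L=S_n\setminus\{s_1\}$ (identified with $W_{n-1}$ and $W_{[2,n]}$ as in Lemma~\ref{restriction}), and to push the hypothesis that $\mathscr{I}(w)$ is a union of cells of $(D_J,J)$, where $J=J_{\lambda/\mu}$, down to each of these smaller groups by means of Proposition~\ref{cellrestrict}. I will treat $\lessthan tn$ in detail; the case of $-1+(\morethan t1)$ is the same argument with $L$ replacing $K$, followed by the obvious isomorphism $\varphi\colon W_{[2,n]}\to W_{n-1}$, $s_i\mapsto s_{i-1}$.

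First I would let $d$ be the $D_K^{-1}$-component of $w$, so that $w=w_1d$ with $w_1=wd^{-1}\in W_K$ the left $W_K$-component and $l(w)=l(w_1)+l(d)$. Lemma~\ref{restriction} then identifies $w_1=\perm(\lessthan tn)$ and $M:=K\cap dJd^{-1}=J_{\kappa/\mu}$, where $\kappa/\mu$ is the shape of $\lessthan tn$. Because $\mathscr{I}(w)$ is by hypothesis a union of cells of $(D_J,J)$ we have $\mathscr{I}(w)\subseteq D_J$; and from $w=w_1d$ with additive lengths we get $d\leqslant\lside w$, so $d\in\mathscr{I}(w)\subseteq D_J$, while also $d\in D_K^{-1}$. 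Hence $d\in D_{K,J}$, which is exactly what is needed to apply Proposition~\ref{cellrestrict}.

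The crux is the identity
\[
\{\,y\in W_K\mid yd\in\mathscr{I}(w)\,\}=\mathscr{I}(w_1).
\]
To prove it I would use $l(yd)=l(y)+l(d)$ for $y\in W_K$ (valid since $d\in D_K^{-1}$): the condition $yd\leqslant\lside w$, namely $l(w(yd)^{-1})=l(w)-l(yd)$, simplifies via $w(yd)^{-1}=w_1y^{-1}$ and the two length identities to $l(w_1y^{-1})=l(w_1)-l(y)$, that is, to $y\leqslant\lside w_1$. Granting this, I would write $\mathscr{I}(w)$ as a disjoint union of cells $X$ of $(D_J,J)$; Proposition~\ref{cellrestrict} makes each $Xd^{-1}\cap W_K$ a union of cells of $(D_M^K,M)$, and taking the union over $X$ shows that $\{\,y\in W_K\mid yd\in\mathscr{I}(w)\,\}=\mathscr{I}(w)d^{-1}\cap W_K$ is such a union too. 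Since this set equals $\mathscr{I}(\perm(\lessthan tn))$ and $M=J_{\kappa/\mu}$, this is precisely the assertion that $\lessthan tn$ is a cell ideal generating tableau for $W_{n-1}$.

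For $-1+(\morethan t1)$ I would repeat the argument with the $D_L^{-1}$-component $e$ of $w$; Lemma~\ref{restriction} gives $\perm(\morethan t1)$ as the left $W_L$-component and $L\cap eJe^{-1}=\{\,s_i\in L\mid s_{i-1}\in J_{\lambda/\nu}\,\}$. The isomorphism $\varphi$ then carries this set to $J_{\lambda/\nu}$, carries $\perm(\morethan t1)$ to $\perm(-1+(\morethan t1))$, and carries cells of $(D^L_{L\cap eJe^{-1}},L\cap eJe^{-1})$ to cells of $(D_{J_{\lambda/\nu}},J_{\lambda/\nu})$, so that the conclusion transfers to $W_{n-1}$. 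The hard part, I expect, will be the displayed identity—verifying that the left weak order ideal generated by $w$ restricts, along the coset $W_Kd$, to the ideal generated by the $W_K$-component—since everything else is bookkeeping that invokes Lemma~\ref{restriction} and Proposition~\ref{cellrestrict}.
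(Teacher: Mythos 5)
Your proposal is correct and follows essentially the same route as the paper's proof: decompose $w$ as (left $W_K$-component)$\cdot$($D_K^{-1}$-component), use Lemma~\ref{restriction} to identify these with $\perm(\lessthan tn)$ and to compute $K\cap dJd^{-1}=J_{\kappa/\mu}$, establish $\mathscr{I}(w)d^{-1}\cap W_K=\mathscr{I}(\perm(\lessthan tn))$ via length additivity, and apply Proposition~\ref{cellrestrict}. The only cosmetic difference is that you justify $d\in D_J$ by noting $d\in\mathscr{I}(w)\subseteq D_J$, whereas the paper gets this directly from Lemma~\ref{restriction}; both are fine.
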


\begin{proof}
Let \(\lambda/\mu\) be the shape of~\(t\), and let \(\kappa\) be defined by
\([\kappa]=[\lambda]\setminus\{t^{-1}(n)\}\), so that \(\kappa/\mu\) is the
shape of \(\lessthan tn\). Let \(K=S_n\setminus\{s_{n-1}\}\), so that
\(W_{n-1}\) can be identified with the standard parabolic subgroup \(W_K\)
of \(W=W_n\) and let \(J=J_{\lambda/\mu}\subseteq S_n\). Let \(w=\perm(t)\) and
\(v=\perm(\lessthan tn)\), so that \(v\) is the left \(W_K\)-component of~\(w\),
by Lemma~\ref{restriction}.

Let \(d\) be the \(D_K^{-1}\)-component of~\(w\), so that \(w=vd\) and \(d\in D_{K,J}\).
Since \(l(vd)=l(v)+l(d)\) it follows that if \(y\in W_K\) then \(y\leqslant\lside v\)
if and only if \(yd\leqslant\lside vd\). Hence
\(\mathscr{I}(w)d^{-1}\cap W_K=\mathscr{I}(v)\). But since \(t\) is a cell ideal
generating tableau it follows that \(\mathscr{I}(w)\) is a union of cells of
\((D_J,J)\), and hence, by Proposition~\ref{cellrestrict},
\(\mathscr{I}(w)d^{-1}\cap W_K\) is a union of cells of \((D^K_M,M)\), where
\(M=K\cap dJd^{-1}\). Since \(M=J_{\kappa/\mu}\), by Lemma~\ref{restriction},
this shows that \(\mathscr{I}(v)\) is a union of \((D_M,M)\) cells in~\(W_{n-1}\),
and hence \(\lessthan tn\) is a cell ideal generating tableau for~\(W_{n-1}\),
as claimed.

The proof of the other part is similar, using \(L=S_n\setminus\{s_1\}\) instead
of~\(K\), identifying \(W_L\) with \(W_{[2,n]}\), and applying the obvious
isomorphism \(W_{[2,n]}\cong W_{n-1}\).
\qed
\end{proof}

\begin{lemma}\label{exceptionals}
Suppose that \(n\geqslant 3\). The following two tableaux \(t\) and \(u\), namely
\[
t\ =\ \vcenter{\offinterlineskip\small
\hbox{\hskip 13.4pt\vrule height0.4 pt depth0 pt width 108.4pt}
\hbox{\hskip 13.4pt\vrule height 9 pt depth 4 pt\hbox to 13pt{\hfil\(2\)\hfil}\vrule
\hbox to 13pt{\hfil\(3\)\hfil}\vrule\hbox to 30pt{\hfil\(\cdots\)\hfil}\vrule
\hbox to 25pt{\hfil\(n-2\)\hfil}\vrule
\hbox to 25pt{\hfil\(n-1\)\hfil}\vrule} 
\hrule
\hbox{\vrule height 9 pt depth 4 pt\hbox to 13pt{\hfil\(1\)\hfil}\vrule
\hbox to 13pt{\hfil\(n\)\hfil}\vrule}
\hrule width 27.2 pt}\ ,
\qquad
u\ =\ \vcenter{\offinterlineskip\small
\hbox{\hskip 82.6pt\vrule height0.4 pt depth0 pt width 39.2pt}
\hbox{\hskip 82.6pt\vrule height 9 pt depth 4 pt\hbox to 25pt{\hfil\(1\)\hfil}\vrule
\hbox to 13pt{\hfil \(n\)\hfil}\vrule}
\hrule
\hbox{\vrule height 9 pt depth 4 pt\hbox to 13pt{\hfil\(2\)\hfil}\vrule
\hbox to 13pt{\hfil\(3\)\hfil}\vrule
\hbox to 30pt{\hfil\(\cdots\)\hfil}\vrule
\hbox to 25pt{\hfil\(n-2\)\hfil}\vrule
\hbox to 25pt{\hfil\(n-1\)\hfil}\vrule} 
\hrule width 108.4pt}\ ,
\]
are not cell ideal generating tableaux.
\end{lemma}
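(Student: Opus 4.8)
The plan is to compute the pair \((w,J)=(\perm(t),J_{\lambda/\mu})\) attached to \(t\) and then to show directly that the associated interval is not a union of Kazhdan--Lusztig left cells. Reading the columns of \(\tab_{\lambda/\mu}\) off the picture, \(t\) has shape \((n-1,2)/(1)\); the only column of \(\tab_{\lambda/\mu}\) containing two consecutive entries is the second, so \(J=J_{\lambda/\mu}=\{s_2\}\) and \(w_J=s_2\), and comparing \(t\) with \(\tab_{\lambda/\mu}\) gives \(w=\perm(t)=[\,1,2,n,3,4,\ldots,n-1\,]\) in one-line notation (thus \(w(2)=2<n=w(3)\), so that \(w\in D_J\)). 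By Definition~\ref{cellidealtableaux} and Remark~\ref{D_Jcells}, together with the fact that \(\mathscr{I}(w)\subseteq D_J\) (since \(v\leqslant\lside w\) and \(w\in D_J\) force \(v\in D_J\)), the tableau \(t\) is a cell ideal generating tableau if and only if \(X:=\mathscr{I}(w)w_J\) is a union of left cells of \(W_n\); by Remark~\ref{summary} this \(X\) is exactly the interval \(\{\,x\mid w_J\leqslant\lside x\leqslant\lside ww_J\,\}\).

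The heart of the argument is a single Kazhdan--Lusztig equivalence. Since \(1\in\mathscr{I}(w)\) we have \(s_2=1\cdot w_J\in X\). I would next check that \(s_1s_2\notin X\): membership \(s_1s_2\in\mathscr{I}(w)w_J\) would give \(s_1=(s_1s_2)w_J^{-1}\in\mathscr{I}(w)\), i.e.\ \(s_1\leqslant\lside w\), which would force \(ws_1<w\); but \(w(1)=1<2=w(2)\), so \(ws_1>w\) and \(s_1\not\leqslant\lside w\). Finally \(s_2\) and \(s_1s_2\) lie in the same left cell. Using the relation \(\approx\) recalled before Lemma~\ref{celllemma}, one computes \(\mathscr{L}(s_2)=\{s_2\}\) and \(\mathscr{L}(s_1s_2)=\{s_1\}\); since \(s_2<s_1s_2\) and \(\mathscr{L}(s_2)\nsubseteq\mathscr{L}(s_1s_2)\), we get \(s_2\approx s_1s_2\). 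Thus \(X\) contains \(s_2\) but not the cell-equivalent element \(s_1s_2\), so by the criterion preceding Lemma~\ref{celllemma} the set \(X\) is not a union of left cells, and \(t\) is not a cell ideal generating tableau.

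For \(u\) I would invoke the symmetry of the diagram. The automorphism \(\sigma\colon s_i\mapsto s_{n-i}\) (conjugation by the longest element of \(W_n\)) preserves length, the order \(\leqslant\lside\) and the partition of \(W_n\) into left cells, and it carries the pair \((\perm(t),\{s_2\})\) to \((\perm(u),\{s_{n-2}\})\), where \(\perm(u)=[\,2,3,\ldots,n-2,1,n-1,n\,]\). Hence \(u\) is a cell ideal generating tableau precisely when \(t\) is, which it is not; equivalently one repeats the computation verbatim with \(s_{n-1},s_{n-2}\) replacing \(s_1,s_2\), the point being that \(s_{n-2}\in X'\) while \(s_{n-1}s_{n-2}\notin X'\) because \(w'(n-1)=n-1<n=w'(n)\) for \(w'=\perm(u)\). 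I expect the only delicate step to be the bookkeeping in the first paragraph that reads off \(w\), \(J\) and \(w_J\) from the diagrams of \(t\) and \(u\); once these are pinned down the cell obstruction is immediate, so no substantial difficulty should remain.
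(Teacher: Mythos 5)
Your proof is correct and follows essentially the same route as the paper's: both exhibit \(s_2\in\mathscr{I}(\perm(t))w_J\) but \(s_1s_2\notin\mathscr{I}(\perm(t))w_J\) while these two elements lie in a common left cell, and then transfer back to cells of \((D_J,J)\) via Remark~\ref{D_Jcells}. The only cosmetic differences are that you apply the \(\approx\)-relation directly where the paper cites Lemma~\ref{celllemma} (whose proof is that same relation), and that you dispatch \(u\) by the diagram automorphism \(s_i\mapsto s_{n-i}\) where the paper simply repeats the computation.
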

\begin{proof}
Let \(x=\perm(t)\) and \(J=J_{\lambda/\mu}\), where \(\lambda/\mu\) is the shape
of~\(t\). Then \(J=\{s_2\}\) and \(x\) is the \((n-2)\)-cycle
\((n, n-1, \ldots,4, 3)\). That is, \(x=s_{n-1}s_{n-2}\cdots s_3\). Clearly
\(s_1\nleqslant\lside x\), and hence \(s_1s_2\notin\mathscr{I}(x)w_J=\mathscr{I}(x){s_2}\).
Since obviously \(s_2\in\mathscr{I}(x)w_J\), we see that
\(\mathscr{I}(x)w_J\cap W_{\{s_1,s_2\}}\) is not a union of cells of \(W_{\{s_1,s_2\}}\).
So it follows from Lemma~\ref{celllemma} that \(\mathscr{I}(x)w_J\) is not a union of
cells of~\(W\), and so \(\mathscr{I}(x)\) is not a union of cells of \((D_J,J)\),
by Remark \ref{D_Jcells}.

Similarly, let \(y=\perm(u)\) and \(K=J_{\kappa/\nu}\), where \(\kappa/\nu\) is
the shape of~\(u\). Then \(K=\{s_{n-2}\}\) and \(y=s_1s_2\cdots s_{n-3}\), and
\(\mathscr{I}(y)w_K\) contains \(s_{n-2}\) but not \(s_{n-1}s_{n-2}\).
So \(\mathscr{I}(y)w_K\cap W_{\{s_{n-2},s_{n-1}\}}\) is not a union of cells
of \(W_{\{s_{n-2},s_{n-1}\}}\), and \(\mathscr{I}(y)\) is not a union of cells
of \((D_K,K)\).
\qed
\end{proof}
We now come to the main theorem of this paper.

\begin{theorem}
If \(n\) is a positive integer and \(W\) is the symmetric group on
\(\{1,2,\ldots,n\}\), then a squashed skew tableau is a \(W\!\)-graph ideal
determining tableau if and only if it is a maximal skew tableau,
and if and only if it is a cell ideal generating tableau for~\(W\!\).
\end{theorem}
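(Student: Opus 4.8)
The plan is to prove the only implication not already recorded in Remark~\ref{summary}, namely that every squashed cell ideal generating tableau is maximal. Combined with the chain \emph{maximal} $\Rightarrow$ \emph{$W$-graph determining} $\Rightarrow$ \emph{cell ideal generating} established there (via Theorem~\ref{finalThr}, Lemma~\ref{squashtop} and Remark~\ref{D_Jcells}), this closes the cycle and yields all three equivalences simultaneously. I would argue by induction on~$n$. The base cases $n\le 2$ are a finite check: the only squashed standard tableaux on at most two boxes are single rows and single columns, each of which is maximal, so there is nothing to exclude.

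For the inductive step, assume $n\ge 3$ and that the assertion holds for $W_{n-1}$. Let $t$ be a squashed cell ideal generating tableau for $W_n$, of shape $\lambda/\mu$. By Lemma~\ref{restrictedcigt} both $\lessthan tn$ and $-1+(\morethan t1)$ are cell ideal generating tableaux for $W_{n-1}$; by Remark~\ref{squashCIGT} their squashes are again cell ideal generating, and being squashed they are maximal by the inductive hypothesis. Thus $r:=\squash(\lessthan tn)$ and $u:=\squash(-1+(\morethan t1))$ are both maximal tableaux, and the task is to show that these two facts force $t=\tab^{\lambda/\mu}$.

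The heart of the argument is the reconstruction of $t$ from $r$ and $u$. Since $r$ is maximal and squashing preserves $\perm$ (Remark~\ref{squashuniqueness}), the restriction of $t$ to the boxes containing $1,\dots,n-1$ is column-equivalent to a maximal tableau, which pins down the relative placement of $1,\dots,n-1$ in~$t$; since $u$ is maximal, the same holds for $2,\dots,n$. Reading these constraints against Corollary~\ref{squashtop-cor}, which records exactly which maximal tableaux, and with which shapes, occur as the squashed restrictions $\squash(\lessthan{\tab^{\lambda/\mu}}n)$ and $\squash(\morethan{\tab^{\lambda/\mu}}1)$, I would show that the box containing~$1$ must sit at the start of the first nonempty row and the box containing~$n$ at the end of the last nonempty row; once both extremal boxes are located, the two maximality conditions overlap on $2,\dots,n-1$ and force $t$ to be maximal throughout. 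The only configurations in which $r$ and $u$ can both be maximal without $t$ being maximal are those in which~$1$ lies one row below, or~$n$ one row above, its maximal position. In each such case I would iterate Lemma~\ref{restrictedcigt}, peeling off the largest entries (respectively the smallest) from both ends until the surviving sub-tableau on an interval $[a,b]$ is an isomorphic copy of one of the two forbidden tableaux of Lemma~\ref{exceptionals}; since those are not cell ideal generating, while every such peeling preserves the cell ideal generating property, this contradicts the hypothesis on~$t$. Hence $t$ is maximal.

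I expect the main obstacle to be precisely this last case analysis, together with the bookkeeping introduced by squashing. Deleting $n$ or~$1$ may empty a row or a column and thereby shift row indices, so one must track carefully how the maximal shapes of $r$ and~$u$ described in Corollary~\ref{squashtop-cor} glue back together to recover the shape $\lambda/\mu$ of~$t$, distinguishing the sub-cases $\mu_1<\lambda_1-1$ versus $\mu_1=\lambda_1-1$ (and their column analogues) as in that corollary. Making the interface between the ``top'' deletion (removing~$n$) and the ``bottom'' deletion (removing~$1$) pin down both extremal boxes at once, and correctly matching each residual obstruction to an exceptional tableau of Lemma~\ref{exceptionals} (equivalently, exhibiting the failing rank-$2$ restriction of Lemma~\ref{celllemma} via Proposition~\ref{cellrestrict}), is where the real work lies; the inductive and reductive scaffolding around it is routine.
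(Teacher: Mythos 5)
Your proposal follows essentially the same route as the paper: a minimal-counterexample/induction on~$n$, Lemma~\ref{restrictedcigt} plus Remark~\ref{squashCIGT} to make $\squash(\lessthan tn)$ and $\squash(-1+(\morethan t1))$ maximal, a shape comparison driven by Corollary~\ref{squashtop-cor}, and the exceptional tableaux of Lemma~\ref{exceptionals} to kill the residual configurations. The one organizational difference is that no iteration of the peeling is needed --- the paper's four-case comparison of the two sides of $\squash(\morethan{\tab^{\zeta/\eta}}1)=1+\squash(\lessthan{\tab^{\theta/\xi}}n)$ shows that any non-maximal $t$ surviving the induction is \emph{literally} one of the two forbidden tableaux at level~$n$ itself (arising in the degenerate subcases where $t_1$ or $t_n$ has a single row), so the obstruction is met at the first step rather than after repeated restriction.
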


\begin{proof}
As explained in Remark~\ref{summary} above, it only remains to prove that every
cell ideal generating tableau is maximal. Suppose that this
is false, and let \(n\) be the minimal counterexample. Let
\(W=W_n\), and let \(t\) be a squashed cell ideal generating tableau that is not of
the form~\(\tab^{\lambda/\mu}\). Let \(\lambda/\mu\) be the shape of~\(t\), and let
\(J=J_{\lambda/\mu}\).

Note that modifying \(t\) by removing empty columns does not change \(J\) or
\(\perm(t)\), and so does not alter the fact that \(t\) is a cell ideal
generating tableau. Nor does it alter the fact that \(t\) is squashed and
\(t\ne\tab^{\lambda/\mu}\). So we shall assume that \(t\) has no empty columns.
Since \(t\) is squashed it has no empty rows.

If \(n=1\) there is only one skew diagram \([\lambda/\mu]\) with no empty rows or
columns, and only one standard \((\lambda/\mu)\)-tableau. Since this contradicts
\(t\ne \tab^{\lambda/\mu}\), it follows that \(n>1\).

If \(n=2\) then there are exactly four standard skew tableaux with no empty rows
or columns, namely
\[
\text{\small \begin{ytableau}
1&2
\end{ytableau}\ ,
\qquad
\begin{ytableau}
1\\2
\end{ytableau}\ ,
\qquad
\begin{ytableau}
\none&1\\2
\end{ytableau}
\qquad\text{\normalsize and}\qquad
\begin{ytableau}
\none&2\\1
\end{ytableau}\ ,}
\]
the last of which is not squashed, and hence not equal to~\(t\). So \(t\)
is one of the others, and in each case we see that \(t\) is maximal,
contrary to the choice of~\(t\). So \(n>2\).

Write \(t_n=\squash(\lessthan tn)\) and \(t_1=\squash(\morethan t1)\).
It follows from Lemma~\ref{restrictedcigt} and Remark~\ref{squashCIGT}
that \(t_n\) and \(-1+t_1\) are cell ideal generating tableaux, and so, by
the minimality of our counterexample, it follows that \(t_n=\tab^{\zeta/\eta}\)
and \(-1+t_1=\tab^{\theta/\xi}\) for some \(\zeta/\eta\vdash n-1\) and
\(\theta/\xi\vdash n-1\). Let \(g=\col(t,1)\) and \(h=\col(t,n)\).

Since \(\squash(\morethan{t_n}1)=\squash(\morethan{(\lessthan tn)}1)
=\squash(\lessthan{(\morethan t1)}n)=\squash(\lessthan{t_1}n)\), by
Remark~\ref{squashuniqueness}, we deduce that
\begin{equation}\label{key}
\squash(\morethan{\tab^{\zeta/\eta}}1)=1+\squash(\lessthan{\tab^{\theta/\xi}}n).
\end{equation}
Note that since \(t_n\) and \(t_1\) are both squashed, neither
\([\zeta/\eta]\) nor \([\theta/\xi]\) has empty rows.  Let \(q\) be the
number of rows of~\([\theta/\xi]\). Our strategy is to compare the shapes
of the tableaux on the left and right sides of Eq.~\eqref{key}, using
Corollary~\ref{squashtop-cor}. There are four cases.

\Case{1.}
Suppose that \(1\) is the unique entry in the first row of
\(t_n\) and \(n\) is not the unique entry in the last row of~\(t_1\).
Thus \(\eta_1=\zeta_1-1\) and \(\xi_q<\theta_q-1\). 
For the left hand side of Eq.~\eqref{key}, case~(ii) of
Corollary~\ref{squashtop-cor} applies; for the right hand side, case~(iii) of
Corollary~\ref{squashtop-cor} applies. We find that \(\zeta_i=\theta_{i-1}\)
for \(2\leqslant i\leqslant q\), while \(\zeta_{q+1}=\theta_q-1\).
Note also that \(\theta_q=\col(t_1,n)=\col(t,n)=h\).

Suppose first that \(q\geqslant 2\).
Then \(\zeta_q=\theta_{q-1}\geqslant\theta_q\), and so we can define
a partition \(\zeta'\) by setting \(\zeta'_i=\zeta_i\) for \(i\leqslant q\)
and \(\zeta'_{q+1}=\theta_q=\zeta_{q+1}+1\). Note that
\(\zeta'_i\geqslant\eta_i\) for all~\(i\), and that
\([\zeta'/\eta]=[\zeta/\eta]\cup\{(q+1,h)\}\). Writing
\(t'=\tab^{\smash{\zeta'/\eta}}\), the maximal tableau of shape
\(\zeta'/\eta\), we see that \(t_n=\tab^{\smash{\zeta/\eta}}\)
is the restriction of \(t'\) to \([\zeta/\eta]\), and hence
\(\col(t',a)=\col(t_n,a)=\col(t,a)\) for all \(a\in[1,n-1]\).
Furthermore, \(\col(t',n)=h=\col(t,n)\). So the columns of \(t'\)
are the same as the columns of~\(t\), and since \(t'\) and
\(t\) are both squashed it follows that \(t=t'\). So \(t\) is
maximal, contrary to hypothesis.

It remains to consider the possibility that \(q=1\), which means that
\[
t_1 = \vcenter{\offinterlineskip\small
\hrule
\hbox{\vrule height 9 pt depth 4 pt\hbox to 13pt{\hfil\(2\)\hfil}\vrule
\hbox to 13pt{\hfil\(3\)\hfil}\vrule
\hbox to 30pt{\hfil\(\cdots\)\hfil}\vrule
\hbox to 25pt{\hfil\(n-1\)\hfil}\vrule
\hbox to 13pt{\hfil\(n\)\hfil}\vrule}
\hrule}\,.
\]
It will follow from the reasoning below that \(t_1\) does not have an empty
first column (to the left of the~\(2\)), but in any case we can say that
the partition \(\theta\) has only one part, \(\theta_1=\xi_1+n-1\), and
\(\xi\) has at most one part. Moreover, \(\col(t,a)=\col(t_1,a)=\xi_1+a-1\)
for all \(a\in[2,n]\).

The partition \(\zeta\) has two parts, namely \(\zeta_1=g=\col(t,1)\)
and \(\zeta_2=\theta_1-1=\xi_1+n-2\). So \(g\geqslant \xi_1+n-2\). 
We can see now that \(\xi_1=0\), since \(t\) has no empty columns
and \(\col(t,a)>\xi_1\) for all \(a\in [1,n]\). It is also clear that
\(g\leqslant n\), since \(t\) has at most~\(n\) columns. So we have
\(\col(t,a)=\col(t_1,a)=a-1\) for all \(a\in[2,n]\), and
\(\col(t,1)\in\{n-2,\,n-1,\,n\}\). Note that since \(t\) is squashed,
it is uniquely determined by its columns.

If \(\col(t,1)=n\) then the columns (from left to right) are
\(\{2\}\), \(\{3\}\), \dots, \(\{n\}\)
and~\(\{1\}\), and so \(t=\tab^{\lambda/\mu}\) with
\(\lambda/\mu=(n,n-1)/(n-1)\), contradicting the fact that \(t\) is not
a maximal tableau. If \(\col(t,1)=n-1\) then the columns of \(t\) are
\(\{2\}\), \(\{3\}\), \dots, \(\{n-1\}\)  and \(\{1,n\}\), from which
it follows that \(t=\tab^{\lambda/\mu}\) with \(\lambda/\mu=(n-1,n-1)/(n-2)\),
which again contradicts the fact that \(t\) is not a maximal tableau. Finally,
if \(\col(t,1)=n-2\) then the columns of \(t\) are \(\{2\}\),~\(\{3\}\),
\dots, \(\{n-2\}\), \(\{1,n-1\}\) and \(\{n\}\), and it follows
that \(t\) is the tableau \(u\) of Lemma~\ref{exceptionals},
contradicting the assumption that \(t\) is a cell ideal generating tableau.

\Case{2.}
Suppose that \(1\) is not the unique entry in the first row of \(t_n\) and
\(n\) is the unique entry in the last row of~\(t_1\). Thus
\(\eta_1<\zeta_1-1\) and \(\xi_q=\theta_q-1\). This time case~(i) of
Corollary~\ref{squashtop-cor} applies to the left hand side of
Eq.~\eqref{key}, and case~(iv) applies to the right hand side. The
partition \(\zeta\) has \(q-1\) parts, \(\zeta_i=\theta_i\) and
\(\eta_i=\xi_i\) for \(2\leqslant i\leqslant q-1\), while \(\zeta_1=\theta_1\)
and \(g=\eta_1+1=\xi_1\).

Suppose first that \(q-1\geqslant 2\). Then \(\eta_1\geqslant \eta_2=\xi_2\), and
hence we can define a partition \(\xi'\) by putting \(\xi'_i=\xi_i\) for
\(i\geqslant 2\) and \(\xi'_1=\eta_1=\xi_1-1\). Note that
\(\xi'_i\leqslant\theta_i\) for all~\(i\), and that
\([\theta/\xi']=[\theta/\xi]\cup\{(1,g)\}\). Defining
\(t'\) to be the maximal tableau of shape
\(\theta/\xi'\), we see that \(1+t_1=1+\tab^{\smash{\theta/\xi}}\)
is the restriction of \(t'\) to \([\theta/\xi]\), and hence
\(\col(t',a)=\col(t_1,a)=\col(t,a)\) for all \(a\in[2,n]\).
Furthermore, \(\col(t',1)=g=\col(t,1)\). So the columns of \(t'\)
are the same as the columns of~\(t\), and since \(t'\) and
\(t\) are both squashed it follows that \(t=t'\). So \(t\) is
maximal, contrary to hypothesis.

It remains to consider the possibility that \(q-1=1\), so that
\(t_n=\tab^{\zeta/\eta}\) has only one row. Thus \(\zeta_1=g+n-2\)
and \(\eta_1=g-1\), and \(\col(t,a)=\col(t_n,a)=g+a-1\) for all \(a\in[1,n-1]\).
Note that \(g\leqslant 2\), since \(t\) has at most
\(n\) columns. If \(g=2\) then \(t\) has \(n\) columns, all with exactly
one entry, and it follows that the columns of \(t\) must be \(\{n\}\), \(\{1\}\),
\(\{2\}\), \dots, \(\{n-1\}\). Hence \(t=\tab^{\lambda/\mu}\) with
\(\lambda/\mu=(n,1)/(1)\), contradicting the fact that \(t\) is not maximal.
If \(g=1\) then
\(h=\col(t,n)\) must be \(1\)~or~\(2\), since \(n\) is in the first
nonempty column of~\(t_1\). If \(h=2\) then the columns of~\(t\)
are \(\{1\}\), \(\{2,n\}\), \(\{3\}\), \dots, \(\{n-1\}\), which implies
that \(t\) is the \(t\) of Lemma~\ref{exceptionals}, contradicting to the
fact that \(t\) is a cell ideal generating tableau. If \(h=1\)
then the columns of~\(t\) are \(\{1,n\}\), \(\{2\}\), \(\{3\}\), \dots,
\(\{n-1\}\), from which it follows that \(t=\tab^{\lambda/\mu}\) where
\(\lambda=(n,1)\) and \(\mu\) is empty. Again this contradicts the fact that
\(t\) is not maximal.

\Case{3.}
Suppose that \(1\) is the unique entry in the first row of \(t_n\)
and \(n\) is the unique entry in the last row of~\(t_1\). This time
case~(ii) of Corollary~\ref{squashtop-cor} applies to the left hand side of
Eq.~\eqref{key}, and case~(iv) applies to the right hand side. Both
\(t_1\) and \(t_n\) have~\(q\) rows, and the tableau in Eq.~\eqref{key}
has \(q-1\) rows.

We have \(\zeta_i=\theta_{i-1}\) and \(\eta_i=\xi_{i-1}\) for \(2\leqslant i\leqslant q\),
and it follows that \(\zeta_q=\theta_{q-1}\geqslant \theta_q\) and
\(\eta_q=\xi_{q-1}\geqslant\xi_q\). So there is a \(q+1\) row skew diagram
\([\zeta'/\eta']\) with \(\zeta_{q+1}'=\theta_q\) and \(\eta_{q+1}'=\xi_q\),
and \(\zeta_i'=\zeta_i\) and \(\eta_i'=\eta_i\) for \(i\leqslant q\).
Clearly \([\zeta'/\eta']=[\zeta/\eta]\cup\{(q+1,h)\}\). Defining
\(t'\) to be the maximal tableau of shape \(\zeta'/\eta'\), we see that
\(t_n=\tab^{\zeta/\eta}\) is the restriction of \(t'\) to \([\zeta/\eta]\).
Thus \(\col(t',a)=\col(t_n,a)=\col(t,a)\) for all \(a\in[1,n-1]\),
and \(\col(t',n)=h=\col(t,n)\). So \(t=t'\), contradicting the fact that
\(t\) is not maximal.

\Case{4.}
The only remaining possibility is that \(1\) is not the unique entry in the
first row of \(t_n\) and \(n\) is not the unique entry in the last row of~\(t_1\).
In this case \(t_n\) and \(t_1\) both have \(q\) rows, and the tableau in
Eq.~\eqref{key} also has \(q\) rows. Case~(i) of Corollary~\ref{squashtop-cor}
applies to the left hand side of Eq.~\eqref{key}, and case~(iii) applies to the
right hand side. We find that \(\theta_q=\zeta_q+1\) and \(\theta_i=\zeta_i\)
for \(i< q\), while \(\eta_1=\xi_1-1\) and \(\eta_i=\xi_i\) for \(i>1\).
Thus \(\eta_i\leqslant\xi_i\leqslant\zeta_i\leqslant\theta_i\) for all~\(i\),
and \(\theta/\eta\) is a skew partition. Furthermore,
\([\theta/\eta]=[\zeta/\eta]\cup\{(q,h)\}\). As in the previous cases it follows
that \(\col(t,a)=\col(\tab^{\theta/\eta},a)\) for all \(a\in [1,n]\), so that
\(t=\tab^{\theta/\eta}\), contradicting the fact that \(t\) is not maximal.
This final contradiction completes the proof.
\qed
\end{proof}

To conclude we state the following Corollary, whose proof was explained in
Remark~\ref{summary}.

\begin{corollary}
Let \(n\) be a positive integer and \((W,S)=(W_n,S_n)\), a Coxeter group
of type~\(A_{n-1}\). If \(w\in W\) and \(J\subseteq S\) then
\(\{\,x\in W\mid w_J\leqslant\lside x\leqslant\lside ww_J\,\}\) is a nonempty
union of Kazhdan--Lusztig left cells if and only if \(w=\perm(\tab^{\lambda/\mu})\)
and \(J=J_{\lambda/\mu}\) for some \(\lambda/\mu\vdash n\), and then \(x\in W\)
satisfies \(w_J\leqslant\lside x\leqslant\lside ww_J\) if and only if
\(xw_J\tab_{\lambda/\mu}\in\STD(\lambda/\mu)\).
\end{corollary}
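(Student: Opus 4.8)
The plan is to translate the interval condition into the cell-ideal language settled by the main theorem, and then to read off the tableau description from Lemma~\ref{stdideal}. The crucial point is that the set \(\{\,x\in W\mid w_J\leqslant\lside x\leqslant\lside ww_J\,\}\) depends only on the left weak order, so I would first identify it explicitly as \(\mathscr{I}(w)w_J\).

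To do this I would note that the interval is nonempty if and only if its endpoints are comparable, i.e.\ \(w_J\leqslant\lside ww_J\), which unwinds (using \(w_J^{-1}=w_J\)) to \(l(ww_J)=l(w)+l(w_J)\), that is, to \(w\in D_J\). When \(w\notin D_J\) the set is empty, and the right-hand condition also fails because \(\perm(\tab^{\lambda/\mu})\) always lies in \(D_{J_{\lambda/\mu}}\) (as \(\tab^{\lambda/\mu}\) is column standard); so the asserted equivalence holds vacuously. Assuming \(w\in D_J\), I would check that \(d\mapsto dw_J\) is a bijection \(D_J\to\{\,x\mid w_J\leqslant\lside x\,\}\), and that for \(d\in D_J\) one has \(dw_J\leqslant\lside ww_J\) if and only if \(d\leqslant\lside w\); both reduce, after cancelling the length-additive factor \(w_J\), to the single length identity \(l(wd^{-1})=l(w)-l(d)\). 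Since the right descent set can only grow as one moves up in \(\leqslant\lside\), every \(v\leqslant\lside w\) already lies in \(D_J\), so \(\mathscr{I}(w)\subseteq D_J\) and the interval equals \(\mathscr{I}(w)w_J\).

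With the interval identified, I would apply Remark~\ref{D_Jcells}: the subset \(\mathscr{I}(w)w_J\) of \(D_Jw_J\) is a union of left cells of \(W\) exactly when \(\{\,d\in D_J\mid dw_J\in\mathscr{I}(w)w_J\,\}=\mathscr{I}(w)\) is a union of cells of \((D_J,J)\). Hence the interval is a nonempty union of left cells precisely when \(w\in D_J\) and \(\mathscr{I}(w)\) is such a union, which by Definition~\ref{cellidealtableaux} and Remark~\ref{summary} means that the squashed tableau \(t=\tab(J,w)\) is a cell ideal generating tableau. The main theorem then forces \(t\) to be maximal, so \(\tab(J,w)=\tab^{\lambda/\mu}\), whence \(w=\perm(\tab^{\lambda/\mu})\) and \(J=J_{\lambda/\mu}\); conversely, for such a pair Lemma~\ref{squashtop} gives \(\tab(J,w)=\squash(\tab^{\lambda/\mu})\), a maximal and hence (by Theorem~\ref{finalThr} and Remark~\ref{summary}) cell ideal generating tableau.

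For the concluding description, with \(w=\perm(\tab^{\lambda/\mu})\) and \(J=J_{\lambda/\mu}\), the bijection \(v\mapsto vw_J\) carries \(\mathscr{I}(w)\) onto the interval, so \(x\) lies in it if and only if \(xw_J\in\mathscr{I}(w)\); by Lemma~\ref{stdideal} this is equivalent to \(xw_J\tab_{\lambda/\mu}\in\STD(\lambda/\mu)\), as required. I expect the main obstacle to be the weak-order bookkeeping of the second paragraph: although each step is elementary, pinning down the equality with \(\mathscr{I}(w)w_J\) and correctly disposing of the case \(w\notin D_J\) is where the real verification lies, since everything after it is an assembly of results already established.
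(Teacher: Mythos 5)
Your proposal is correct and follows essentially the same route as the paper, whose ``proof'' is just the chain of identifications sketched in Remark~\ref{summary}: the interval equals \(\mathscr{I}(w)w_J\), Remark~\ref{D_Jcells} converts the left-cell condition into \(\mathscr{I}(w)\) being a union of cells of \((D_J,J)\), the main theorem characterizes the resulting pairs, and Lemma~\ref{stdideal} gives the tableau description. Your second paragraph merely makes explicit the weak-order bookkeeping (the case \(w\notin D_J\), the bijection \(d\mapsto dw_J\), and \(\mathscr{I}(w)\subseteq D_J\)) that the paper leaves implicit, and all of those verifications are accurate.
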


\begin{acknowledgement}
I am grateful to A/Prof Robert B. Howlett for providing many improvements to the exposition
of this paper.
\end{acknowledgement}

\end{document}